\colorlet{shadecolor}{red!20}
\newtheorem{theorem}{Theorem}[section]
\newtheorem{corollary}[theorem]{Corollary}
\newtheorem{lemma}[theorem]{Lemma}
\newtheorem{proposition}[theorem]{Proposition}
\newtheorem{definition}[theorem]{Definition}
\newtheorem*{theorem*}{Theorem}
\theoremstyle{definition}
\newtheorem{example}[theorem]{Example}
\newcommand{\bzero}{\beta(0,1)_{d}}
\newcommand{\bR}{\beta\mathbb{R}_{d}}
\newcommand{\Pol}{P\left(x_{1},\dots,x_{n}\right)}
\newcommand{\U}{\mathcal{U}}
\newcommand{\N}{\mathbb{N}}
\newcommand{\Z}{\mathbb{Z}}
\newcommand{\Q}{\mathbb{Q}}
\newcommand{\R}{\mathbb{R}}
\newcommand{\V}{\mathcal{V}}
\begin{document}

\begin{center}
\uppercase{\bf Partition regularity of polynomial systems near zero}
\vskip 20pt
{\bf Lorenzo Luperi Baglini\footnote{Dipartimento di Matematica, Universit\`{a} di Milano, Via Saldini 50, 20133 Milano, Italy, supported by grant P30821-N35 of the Austrian Science Fund FWF.}}\\

{\tt lorenzo.luperi@unimi.it}\\
\end{center}

\centerline{\bf Abstract}
\noindent 

Recently, S.~Kanti Patra and Md.~Moid Shaik proved the existence of monochromatic solutions to systems of polynomial equations near zero for particular dense subsemigroups $S$ of $((0,\infty),+)$. We extend their results to a much larger class of systems whilst weakening the requests on $S$, using solely basic results about ultrafilters.

\vspace{0.5cm}

{\bfseries Keywords:} Ultrafilters, partition regularity of equations, minimal bilateral ideals, infinitesimal semigroups.

\section{Introduction}

The study of the partition regularity of linear systems of equations started in the early twentieth century, with the works of Schur, Van Der Waerden and Rado. Let us start by recalling the basic definition.

\begin{definition} Let $S\subseteq\R$. Let $P_{1}\left(x_{1},\dots,x_{n}\right)$,$\dots$,$P_{m}\left(x_{1},\dots,x_{n}\right)\in \R\left[x_{1},\dots,x_{n}\right]$. Let 
\[\sigma\left(x_{1},\dots,x_{n}\right)=\begin{cases}  P_{1}\left(x_{1},\dots,x_{n}\right),\\ \hspace{1.2cm}\vdots \\ P_{m}\left(x_{1},\dots,x_{n}\right). \end{cases}\]

We say that the system of equations $\sigma\left(x_{1},\dots,x_{n}\right)=(0,\dots,0)$ is\footnote{From now on, we will simply write $\sigma\left(x_{1},\dots,x_{n}\right)=0$ to simplify the notation.} partition regular on $S$ if it has a monochromatic solution\footnote{In other papers, strengthened versions of this notion where also some additional properties on the solution, like it being non constant or injective, have been considered. Although minor modifications of our proofs would work also for these strengthened notions, we prefer to use the basic definition of partition regularity so not to have to handle unnecessary complications, since our goal is to show a method to prove the partition regularity near zero of systems that are partition regular on $\R$ or $\Q$.} in every finite coloring of $S\setminus\{0\}$, namely if for every natural number $r$, for every partition $S=\bigcup\limits_{i=1}^{r}A_{i}$, there is an index $j\leq r$ and numbers $a_{1},\dots,a_{n}\in A_{j}$ such that $\forall j\in\{1,\dots,m\} \ P_{j}\left(a_{1},\dots,a_{n}\right)=0$.

\end{definition}

The case of linear systems was settled by Richard Rado (see \cite{rif18}; we recall here the version from \cite{new york}, which covers the partition regularity over $\N,\Z,\R^{+},\R$) in terms of the so-called "columns condition", that we recall.

\begin{definition}\label{columns condition}Let $u,v\in\N$, let $S\in\{\N,\Z,\R^{+},\R\}$, let $F=\Q$ if $S=\N$ or $S=\Z$, and let $F=\R$ if $S=\mathbb{R}^{+}$ or $S=\R$. Let $A$ be a $u\times v$ matrix with entries from $F$, with columns $c_{1},\dots,c_{v}$. We say that $A$ satisfies the columns condition over $F$ if there exists a partition $\{I_{1},\dots,I_{m}\}$ of $\{1,\dots,v\}$ such that 
\begin{itemize}
\item $\sum_{i\in I_{1}}\vec{c_{i}}=\vec{0}$;
\item for each $t\in\{2,\dots,m\}$ (if any) $\sum_{i\in I_{t}}\vec{c_{i}}$ is a linear combination over $F$ of $\{\vec{c_{i}}\mid i\in\bigcup_{k=1}^{t-1}I_{k}\}$.
\end{itemize}
\end{definition}

\begin{theorem}[Rado]\label{RadoS} Let $u,v\in\N$, let $S\in\{\N,\Z,\R^{+},\R\}$, let $F=\Q$ if $S=\N$ or $S=\Z$, and let $F=\R$ if $S=\R^{+}$ or $S=\R$. Let $A$ be a $u\times v$ matrix with entries from $F$, with columns $c_{1},\dots,c_{v}$. Then the system $A\vec{x}=0$ is partition regular over $S$ if and only if $A$ satisfies the columns condition over $F$.\end{theorem}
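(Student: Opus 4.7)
The plan is to prove the two directions separately, following the classical strategy of Rado's original argument together with the adjustments needed for the real and positive-real cases. Necessity is short and explicit---one exhibits a bad coloring witnessing that a matrix not satisfying the columns condition cannot be partition regular---while sufficiency is the harder half, requiring either the machinery of Deuber $(m,p,c)$-sets or an equivalent ultrafilter construction inside $\beta S$.

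For necessity, assume $A\vec{x}=0$ is partition regular on $S$. Over $\N$ or $\Z$ I would use, for each sufficiently large prime $p$ (chosen with respect to the entries of $A$), the finite coloring of $S\setminus\{0\}$ that assigns to an integer the pair consisting of its first nonzero base-$p$ digit together with the position of that digit modulo a suitable integer. Reading any monochromatic solution $\vec{a}$ in increasing order of the $p$-adic valuation of its coordinates produces nested index sets $I_{1},I_{2},\dots,I_{m}$ which, by the choice of $p$, must satisfy the relations of Definition~\ref{columns condition} with coefficients in $\Q$. Over $\R^{+}$ or $\R$ the same idea is implemented after fixing a Hamel basis of $\R$ over $\Q$ and coloring by the leading basis element appearing in the expansion of each coordinate together with the sign of its rational coefficient; a monochromatic solution then yields the columns condition over $F=\R$.

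For sufficiency, the plan is to factor the proof through Deuber's $(m,p,c)$-set theorem. First, a direct linear-algebraic check shows that if $A$ satisfies the columns condition over $F$, then there exist parameters $m,p,c$ depending only on $A$ such that every $(m,p,c)$-set contained in $S$ carries a solution of $A\vec{x}=0$; the solution is read off from any generating tuple of the $(m,p,c)$-set by using the partition $I_{1},\dots,I_{m}$ supplied by the columns condition to assign coefficients. Second, Deuber's theorem asserts that every finite coloring of $S$ admits a monochromatic $(m,p,c)$-set; combining the two steps delivers the desired monochromatic solution, which is precisely partition regularity. Deuber's theorem itself can be obtained either by an iterated Hales--Jewett pigeonhole (Deuber's original combinatorial induction) or, more in tune with this paper, by choosing a minimal idempotent in an appropriate subsemigroup of $\beta S$ and showing that every element of such an ultrafilter contains an $(m,p,c)$-set.

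The main obstacle I anticipate is Deuber's theorem itself: proving that $(m,p,c)$-sets are partition regular requires nontrivial input, either combinatorial (Hales--Jewett) or topological-algebraic (the existence and interaction of minimal idempotents in $\beta S$). Additional care is needed in the case $F=\R$, where the necessity coloring relies on the axiom of choice via a Hamel basis and the sufficiency argument must keep track of the fact that the coefficients witnessing the columns condition are real rather than rational, so the $(m,p,c)$-set machinery has to be run with real parameters. Finally, the passage between $\N,\Z,\R^{+},\R$ introduces routine but necessary bookkeeping to make sure the chosen colorings and ultrafilters actually live inside the prescribed $S$.
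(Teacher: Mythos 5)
The paper offers no proof of this theorem: it is quoted as a known result, with references to Rado's original paper and to Hindman's \emph{Image partition regularity over the reals}, so there is nothing internal to compare your proposal against. Judged on its own terms, your outline of the $\N$/$\Z$ case is the standard one (base-$p$ colorings for necessity, Deuber $(m,p,c)$-sets plus their partition regularity for sufficiency), and as a plan it is sound, although essentially all of the content is delegated to Deuber's theorem, which you acknowledge.

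The genuine gap is in the necessity argument over $\R^{+}$ and $\R$. The coloring you propose --- assign to each $x$ the leading Hamel-basis element occurring in its expansion over $\Q$, together with the sign of its coefficient --- is not a \emph{finite} coloring: a Hamel basis of $\R$ over $\Q$ is uncountable, so this partition has continuum many classes and cannot witness a failure of partition regularity, which by definition only quantifies over finite partitions. To make necessity work over $\R$ one has to do something different, e.g.\ observe that the columns condition over $\R$ holds for $A$ if and only if it holds over the finitely generated subfield of $\R$ generated by the entries of $A$, and then build finite bad colorings using that finitely generated structure (this is essentially what Hindman does in the cited paper). A parallel caveat applies to sufficiency over $\R$: one needs a real-parameter analogue of Deuber's theorem, which is again nontrivial and is not obtained by simply ``running the machinery with real parameters''; in the literature it goes through central sets or image partition regularity over the reals.
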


For linear equations on $\R$, Rado's result reads as follows:

\begin{theorem}[Rado]\label{Rado}Let $\Pol= \sum_{i=1}^{n}a_{i}x_{i}\in\mathbb{R}\left[x_{1},\dots,x_{n}\right]$ be a linear polynomial with nonzero coefficients. The following conditions are equivalent:
\begin{enumerate}
	\item The equation $\Pol=0$ is partition regular on $\mathbb{R}$;
	\item there is a nonempty subset $J$ of $\{1,\dots,n\}$ such that $\sum\limits_{j\in J}a_{j}=0$.
\end{enumerate}
\end{theorem}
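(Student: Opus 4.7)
The plan is to derive Theorem~\ref{Rado} as a direct specialization of Theorem~\ref{RadoS}. For a single linear equation $\sum_{i=1}^{n} a_i x_i = 0$ with $a_i \in \mathbb{R}\setminus\{0\}$, the associated matrix $A$ is $1 \times n$ and its columns are the scalars $c_i = a_i$, viewed as vectors in the one-dimensional $\mathbb{R}$-vector space $\mathbb{R}$. So the entire argument reduces to showing that, in this one-dimensional setting, the columns condition of Definition~\ref{columns condition} collapses to the simpler subset-sum statement in condition $(2)$.

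For the implication $(2) \Rightarrow (1)$, I would assume a nonempty $J \subseteq \{1,\dots,n\}$ with $\sum_{j \in J} a_j = 0$, set $I_1 := J$, and then partition the remaining indices $\{1,\dots,n\}\setminus J$ arbitrarily (say into singletons) as $I_2,\dots,I_m$. The first bullet of the columns condition holds by construction. For the second bullet, each $\sum_{i\in I_t} a_i$ is just a real number; since $J$ is nonempty and every $a_j$ with $j\in J$ is nonzero, any nonzero $a_j$ already spans $\mathbb{R}$ as an $\mathbb{R}$-vector space, so $\sum_{i\in I_t} a_i$ is automatically an $\mathbb{R}$-linear combination of the earlier columns. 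Thus the columns condition over $\mathbb{R}$ holds and Theorem~\ref{RadoS} yields partition regularity on $\mathbb{R}$.

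For $(1) \Rightarrow (2)$, I would invoke Theorem~\ref{RadoS} in the reverse direction: partition regularity of $P(x_1,\dots,x_n)=0$ over $\mathbb{R}$ forces a partition $\{I_1,\dots,I_m\}$ of $\{1,\dots,n\}$ satisfying the columns condition, and in particular $\sum_{i\in I_1} a_i = 0$ with $I_1 \neq \emptyset$. Setting $J := I_1$ finishes the proof.

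Since Theorem~\ref{RadoS} is already available, there is no real obstacle here; the only point worth emphasizing is the collapse of the recursive second bullet of Definition~\ref{columns condition} in the one-dimensional setting, which is what makes Rado's general criterion for matrices reduce to the clean subset-sum criterion for single equations.
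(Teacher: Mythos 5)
Your proposal is correct and matches the paper's (implicit) treatment: the paper states Theorem~\ref{Rado} as the single-equation reading of Theorem~\ref{RadoS} without spelling out the details, and your verification that the columns condition collapses to the subset-sum condition for a $1\times n$ matrix with nonzero entries is exactly the right specialization. Both directions are handled correctly, including the observation that any nonzero $a_j$ with $j\in J=I_1$ spans $\mathbb{R}$, which makes the recursive second bullet of Definition~\ref{columns condition} vacuously satisfiable.
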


Let us notice that an immediate, but interesting, consequence of Theorem \ref{Rado} is the following\footnote{This is a particular instance of Theorem 2.1 in \cite{lefmann}; we present a proof here as it is very simple.}:

\begin{theorem}\label{Rado2} Let $u,v\in\N$. Let $A$ be a $u\times v$ matrix with entries from $\R$, with columns $c_{1},\dots,c_{v}$. Let $r\in \R\setminus\{0\}$, and let $\vec{x}^{r}:=\left(x_{1}^{r},\dots,x_{m}^{r}\right)$. Then the system $A\vec{x}^{r}=0$ is partition regular over $\R^{+}$ if and only if $A$ satisfies the columns condition over $\R$.\end{theorem}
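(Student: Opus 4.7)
The plan is to reduce the statement to Rado's theorem (Theorem \ref{RadoS}) applied over $S = \R^+$ by exploiting the fact that, on $\R^+$, the map $\phi_r : x \mapsto x^r$ is a bijection for every $r \in \R \setminus \{0\}$. The whole point of restricting to $\R^+$ (rather than $\R$) is precisely that $\phi_r$ is well-defined and invertible there, with inverse $\phi_{1/r}$.

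Concretely, I would argue as follows. Given any finite coloring $c : \R^+ \to \{1,\dots,k\}$, define a new coloring $c' : \R^+ \to \{1,\dots,k\}$ by $c'(y) := c(y^{1/r})$. Then $(a_1,\dots,a_v)$ is a monochromatic solution of $A\vec{y} = 0$ under $c'$ if and only if $(a_1^{1/r},\dots,a_v^{1/r})$ is a monochromatic solution of $A\vec{x}^r = 0$ under $c$, since $(a_i^{1/r})^r = a_i$. Running the correspondence in both directions (using $c''(z) := c(z^r)$ for the converse), one obtains that $A\vec{x}^r = 0$ is partition regular over $\R^+$ if and only if $A\vec{x} = 0$ is partition regular over $\R^+$.

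Now apply Theorem \ref{RadoS} with $S = \R^+$ and $F = \R$: the linear system $A\vec{x} = 0$ is partition regular over $\R^+$ precisely when $A$ satisfies the columns condition over $\R$. Combining this with the equivalence above yields the desired characterization.

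There is no real obstacle here; the only subtlety worth flagging is the restriction to $\R^+$, which is essential because $x \mapsto x^r$ is not in general defined (let alone bijective) on all of $\R$ when $r$ is an arbitrary nonzero real. This is also why the statement is formulated over $\R^+$ and invokes the columns condition over $F = \R$ rather than $F = \Q$.
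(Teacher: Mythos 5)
Your proposal is correct and follows essentially the same route as the paper: both use the bijection $x\mapsto x^{1/r}$ of $\R^{+}$ to transport an arbitrary finite coloring, thereby reducing the partition regularity of $A\vec{x}^{r}=0$ to that of the linear system $A\vec{x}=0$ over $\R^{+}$, which Theorem \ref{RadoS} characterizes by the columns condition over $\R$. Your write-up is in fact slightly more explicit than the paper's, since it spells out both directions of the coloring correspondence.
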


\begin{proof} Let $\R^{+}=A_{1}\cup\dots\cup A_{k}$ be a finite coloring of $\R^{+}$. As $f_{r}(x):=x^{\frac{1}{r}}:\R^{+}\rightarrow \R^{+}$ is a bijection, we can consider the coloring $\R^{+}=B_{1}\cup\dots\cup B_{k}$ where for each $i\leq k$ one sets $B_{i}:=\{f_{r}(x)\mid x\in A_{i}\}$. It holds that the system $A\vec{x}^{r}=0$ has a monochromatic solution with respect to coloring $A_{1}\cup\dots\cup A_{k}$ if and only if the linear system $A\vec{x}$ has a monochromatic solution with respect to coloring $B_{1}\cup\dots\cup B_{k}$.\end{proof}

For example, the Pythagorean equation $x^{2}+y^{2}=z^{2}$ is partition regular on $\R$, whilst it is an open problem if it is partition regular on $\N$ or not; also, Fermat equation $x^{3}+y^{3}=z^{3}$ is partition regular on $\R$, whilst it does not even have solutions on $\N$. For results about the partition regularity of similar kinds of diagonal Diophantine equations on $\N$ we refer to \cite{sean}.

The general nonlinear case is much more complicated. For example, whilst, as expected, a linear system is partition regular on $\R$ if and only if it is partition regular on $\R^{+}$, the same does not hold for nonlinear systems, or even nonlinear equations: the equation $x_{1}x_{2}+x_{3}=0$ is partition regular on $\R$ (even if we add the natural request that $x_{1},x_{2},x_{3}$ should be mutually distinct), but it is clearly not partition regular on $\R^{+}$. 

In \cite{rif6}, P.~Csikv\'ari, K.~Gyarmati and A.~S\'arközy asked wheter every finite coloring of $\N$ contains monochromatic $a,b,c,d$ such that $a+b=cd$. The question was answered positively by N.~Hindman in \cite{rif13} (see also \cite{bergelson} for a short proof) as a particular case of the following result\footnote{The main result in \cite{rif13} is Theorem 5, which is more general than the result that we recall here.}:

\begin{theorem}[Hindman]\label{ConsHind} For all natural numbers $n,m\geq 1$ the equation \[\sum\limits_{i=1}^{n}x_{i}-\prod\limits_{j=1}^{m}y_{j}=0\] is partition regular on $\N$. \end{theorem}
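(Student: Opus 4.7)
The plan is to give a short ultrafilter-theoretic proof that exploits both semigroup structures on $\bN$ simultaneously. The single substantive input, beyond routine ultrafilter manipulations, is a classical fact in the algebra of $\bN$: there exists an ultrafilter $p\in\bN$ which is an idempotent with respect to both operations, i.e.\ $p+p=p$ and $p\cdot p=p$. I would fix such a $p$ once and for all.

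Given any finite coloring $\N=A_{1}\cup\cdots\cup A_{r}$, exactly one color class $A=A_{j}$ satisfies $A\in p$. Setting
\[
\Sigma_{n}(A):=\{x_{1}+\cdots+x_{n}\,:\,x_{i}\in A\},\qquad \Pi_{m}(A):=\{y_{1}\cdots y_{m}\,:\,y_{j}\in A\},
\]
any element $z\in\Sigma_{n}(A)\cap\Pi_{m}(A)$ yields a monochromatic solution, since such $z$ is of the form $\sum_{i}x_{i}$ with $x_{i}\in A$ and simultaneously of the form $\prod_{j}y_{j}$ with $y_{j}\in A$. So the whole problem reduces to showing $\Sigma_{n}(A)\in p$ and $\Pi_{m}(A)\in p$, which forces their intersection to be nonempty.

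To prove $\Sigma_{n}(A)\in p$, I would use that, by additive idempotency and associativity of $(\bN,+)$, $p=\underbrace{p+\cdots+p}_{n}$. Combined with the standard characterization of iterated sums in $\bN$, this gives
\[
B\in\underbrace{p+\cdots+p}_{n}\;\Longleftrightarrow\;\bigl\{x_{1}\,:\,\bigl\{x_{2}\,:\,\cdots\{x_{n}\,:\,x_{1}+\cdots+x_{n}\in B\}\in p\cdots\bigr\}\in p\bigr\}\in p.
\]
Taking $B=\Sigma_{n}(A)$, the crucial observation is that whenever $x_{1},\dots,x_{n}\in A$ one has $x_{1}+\cdots+x_{n}\in\Sigma_{n}(A)$ by definition; hence the innermost set contains $A\in p$, so lies in $p$, and iteratively each outer set also contains $A$ and so lies in $p$. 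Thus $\Sigma_{n}(A)\in p$. The proof that $\Pi_{m}(A)\in p$ is the word-for-word multiplicative analog, using $p\cdot p=p$ and the corresponding characterization of iterated products in $\bN$.

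The main obstacle, and essentially the only nonelementary step, is securing the existence of the bi-idempotent ultrafilter $p$: this depends on the subtle interaction between the two compact right-topological semigroup structures on $\bN$ and is not an immediate consequence of Ellis's theorem applied separately to $(\bN,+)$ and $(\bN,\cdot)$. Once this is in hand, the rest of the argument is a direct computation in the iterated ultrafilter semigroup.
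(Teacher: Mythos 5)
Your reduction itself is fine: \emph{if} there existed $p\in\bN$ with $p+p=p$ and $p\cdot p=p$, then for every $A\in p$ one would indeed get $\Sigma_{n}(A)\in p$ and $\Pi_{m}(A)\in p$ by the iterated-sum/product characterization, and any point of the (nonempty) intersection would yield a monochromatic solution. The fatal problem is the ``single substantive input'' on which everything rests: no such $p$ exists. It is a theorem of Hindman that no ultrafilter on $\N$ is simultaneously an additive and a multiplicative idempotent (see N.~Hindman, \emph{Sums equal to products in $\beta\N$}, Semigroup Forum 21 (1980), 221--255, or Chapter 13 of \cite{rif12}); this non-existence is exactly why the partition regularity of $a+b=cd$ resisted the naive attack for so long, and it is emphasized in \cite{bergelson}. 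You correctly isolated the existence of a bi-idempotent as the crux and noted it does not follow from Ellis's theorem applied to each operation separately, but you then asserted it as a classical fact when the classical fact is its negation. As written, the object your argument is built on cannot be produced, so the proof collapses at its foundation.

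The standard repair --- the one underlying Hindman's proof and the machinery of this paper --- is to work with the multiplicative structure only, but to choose the multiplicative idempotent inside $\overline{K(\bN,\odot)}$, the closure of the smallest two-sided ideal of $(\bN,\odot)$. By Rado's theorem the homogeneous linear equation $\sum_{i=1}^{n}x_{i}-z=0$ is partition regular on $\N$, so by Theorem \ref{bil id} the set of its witnessing ultrafilters is a closed bilateral ideal of $(\bN,\odot)$ and therefore contains $\overline{K(\bN,\odot)}$; any idempotent $\U=\U\odot\U$ there thus witnesses the additive equation \emph{without} being an additive idempotent, while its multiplicative idempotency gives the product $z=\prod_{j=1}^{m}y_{j}$ in the same cell via Proposition \ref{IPO}, and the two pieces are glued along $z$ by Lemma \ref{lemmasystem} (the inductive construction in the proof of Theorem \ref{lev2} carries this out in greater generality). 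In short: your computation survives on the product side, but the additive side must come from minimality of the idempotent in $(\bN,\odot)$, not from additive idempotency.
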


Theorem \ref{ConsHind} was the major motivation for our research in \cite{mio}. In this paper, we showed that Theorem \ref{ConsHind} could be seen as a particular case of a more general characterization of partition regular equations\footnote{In \cite{mio} we showed the partition regularity of polynomials belonging to a much larger class $\mathcal{C}$ but, since the definition of $\mathcal{C}$ is rather involved, we limit ourself here to present a simpler generalization of Theorem \ref{ConsHind}.}, that we proved by means of nonstandard methods. The same result is proven in \cite{advances} by means of purely standard methods based on ultrafilters. To introduce the result, we first need a definition: 

\begin{definition}\label{Qi} Let $m$ be a positive natural number and let $\{y_{1},\dots,y_{m}\}$ be a set of mutually distinct variables. For all $F\subseteq\{1,..,m\}$ we denote by $Q_{F}(y_{1},\dots,y_{m})$ the monomial
\begin{center} $Q_{F}(y_{1},\dots,y_{m})=\begin{cases} \prod\limits_{j\in F} y_{j}, & \mbox{if  } F\neq \emptyset;\\ 1, & \mbox{if  } F=\emptyset.\end{cases}$ \end{center}
\end{definition}

\begin{theorem}\label{lev} Let $n\geq 2$ be a natural number, let $R\left(x_{1},\dots,x_{n}\right)=\sum\limits_{i=1}^{n} a_{i}x_{i}\in\Z\left[x_{1},\dots,x_{n}\right]$ be a partition regular polynomial on $\N$, and let $m$ be a positive natural number. Then, for all $F_{1},\dots,F_{n}\subseteq\{1,\dots,m\}$, the polynomial
\begin{center} $P\left(x_{1},\dots,x_{n},y_{1},\dots,y_{m}\right)=\sum\limits_{i=1}^{n} a_{i}x_{i}Q_{F_{i}}\left(y_{1},\dots,y_{m}\right)$ \end{center}
is partition regular. \end{theorem}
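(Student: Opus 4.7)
The plan is to reduce the equation $P(\vec x,\vec y)=0$ to the known partition-regular equation $R(\vec c)=0$ via a multiplicative substitution, and then to realize that substitution monochromatically with an ultrafilter argument. The algebraic core is the identity
\[
P\bigl(c_1 Q_{F_1^{c}}(\vec y),\dots,c_n Q_{F_n^{c}}(\vec y),\, y_1,\dots,y_m\bigr) \;=\; \Bigl(\prod_{j=1}^{m} y_j\Bigr)\,R(c_1,\dots,c_n),
\]
where $F_i^{c}:=\{1,\dots,m\}\setminus F_i$; this follows from the factorisation $Q_{F_i^{c}}(\vec y)\,Q_{F_i}(\vec y)=\prod_{j=1}^{m} y_j$. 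Setting $x_i:=c_i\,Q_{F_i^{c}}(\vec y)$, the task is reduced to producing monochromatic $y_1,\dots,y_m$ and $c_1,\dots,c_n$ with $R(\vec c)=0$ so that each derived $x_i$ also lies in the same color class.

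I would then exhibit an ultrafilter $\U\in\bN$ that is simultaneously (a)~a multiplicative idempotent, $\U\cdot\U=\U$, and (b)~$R$-witnessing, meaning every $A\in\U$ contains $c_1,\dots,c_n$ with $R(\vec c)=0$. The set $\Pi_R$ of $R$-witnessing ultrafilters is nonempty because $R$ is partition regular, is topologically closed in $\bN$, and is closed under multiplication in $\bN$ by the linearity of $R$: if $\U,\V\in\Pi_R$ and $A\in\U\cdot\V$, then for $\U$-many $x\in\N$ the shift $\{z:xz\in A\}$ belongs to $\V$ and so contains $\vec c$ with $R(\vec c)=0$, whence $xc_i\in A$ and $R(xc_1,\dots,xc_n)=x\,R(\vec c)=0$. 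Hence $\Pi_R$ is a closed subsemigroup of $(\bN,\cdot)$, and by the Ellis--Numakura lemma it contains a multiplicative idempotent $\U$.

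Now fix a finite coloring $\N=A_1\cup\cdots\cup A_r$ and let $A$ be the color class belonging to $\U$. Because $\U$ is multiplicatively idempotent, for every $C\in\U$ the set $\{k:C/k\in\U\}$ belongs to $\U$, where $C/k:=\{z:zk\in C\}$. An iterative selection inside $A$ --- at each step adding a new $y_\ell$ chosen from a set in $\U$ that preserves the quotients $A/Q_{T}(\vec y)$ for every relevant $T\subseteq\{1,\dots,\ell\}$ --- produces $y_1,\dots,y_m\in A$ for which $A/Q_{F_i^{c}}(\vec y)\in\U$ for every $i=1,\dots,n$. Then $B:=A\cap\bigcap_{i=1}^{n}\bigl(A/Q_{F_i^{c}}(\vec y)\bigr)\in\U$, and since $\U\in\Pi_R$ there are $c_1,\dots,c_n\in B$ with $R(\vec c)=0$. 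By construction each $x_i:=c_i\,Q_{F_i^{c}}(\vec y)$ lies in $A$, and the displayed identity yields $P(\vec x,\vec y)=0$: the desired monochromatic solution.

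The main technical obstacle is really the existence assertion in the second paragraph: verifying that the $R$-witnessing ultrafilters form a closed multiplicative subsemigroup of $\bN$, so that Ellis--Numakura applies. Once such a $\U$ is in hand, the substitution identity plus the standard two-stage procedure --- pick $\vec y$ first, then $\vec c$ inside the intersection of the quotient sets --- is routine ultrafilter bookkeeping.
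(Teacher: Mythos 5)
Your proposal is correct and takes essentially the same route as the paper, whose proof of the near-zero analogue (Theorem \ref{lev2}) is explicitly an adaptation of the proof of this statement from \cite{advances}: the same factorisation $Q_{F_i^{c}}(\vec y)Q_{F_i}(\vec y)=\prod_j y_j$ reducing $P=0$ to $R=0$, an ultrafilter that is simultaneously multiplicatively idempotent and $R$-witnessing, and a two-stage monochromatic selection. The only cosmetic differences are that you obtain the idempotent via Ellis--Numakura applied to the closed subsemigroup $\Pi_R$ of witnesses rather than from the closed bilateral ideal of Theorem \ref{bil id}, and that you choose $y_1,\dots,y_m$ before the solution of $R=0$, whereas the paper fixes $a_1,\dots,a_n$ in the deepest level set first and then builds the $b_j$.
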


Notice that Theorem \ref{ConsHind} is (apart the renaming $y_{1}:=x_{n+1}$) a particular case of Theorem \ref{lev} with $R\left(x_{1},\dots,x_{n},x_{n+1}\right):=\sum_{i=1}^{n}x_{i}-x_{n+1}, F_{1}=\dots=F_{n}=\emptyset, Q_{F_{n+1}}=\prod\limits_{j=2}^{m}y_{j}$.

Several other results about the partition regularity of equations on $\N$ have been proven in the past few years; we refer to \cite{advances} for an overview, and to \cite{sean} for the latest results we are aware of.

A refined version of the partition regularity of polynomial systems on $\mathbb{R}$, namely the partition regularity near zero, appeared in \cite{HL}. We give the definition for a general $S\subseteq (0,1)$, although in the following we will assume some additional algebraic properties on $S$.

\begin{definition}\label{PR near 0} Let $P_{1}\left(x_{1},\dots,x_{n}\right)$,$\dots$,$P_{m}\left(x_{1},\dots,x_{n}\right)\in\mathbb{R}\left[x_{1},\dots,x_{n}\right]$, and let $S\subseteq (0,1)$. Let
\[\sigma\left(x_{1},\dots,x_{n}\right)=\begin{cases}  P_{1}\left(x_{1},\dots,x_{n}\right),\\ \hspace{1.2cm}\vdots \\ P_{m}\left(x_{1},\dots,x_{n}\right). \end{cases}\]
We say that the system of equations $\sigma\left(x_{1},\dots,x_{n}\right)=0$ is partition regular near zero with respect to $S$ if for all $\varepsilon > 0$, for all finite partitions $S=A_{1}\cup\dots\cup A_{k}$ there exists $j\leq k$ and $a_{1},\dots,a_{n}\in A_{j}\cap (0,\varepsilon)$ such that $\sigma\left(a_{1},\dots,a_{n}\right)=0$.
\end{definition}

Notice that, whilst all systems that are partition regular near zero are also partition regular on $\R^{+}$, the converse is not always true. As a trivial example, the equation $x=1$ is partition regular on $\R^{+}$ for obvious reasons, but it is not partition regular near zero. Anyhow, in all the following, we will always be interested in systems where all equations have no constant term\footnote{At least for linear systems, the case where there is a nonzero constant term can be reduced to the zero constant term case, as proven by Rado in \cite{rif18}.}.

In the very recent paper \cite{indiani} the authors, building on previous work by N.~Hindman and I.~Leader in \cite{HL}, proved the partition regularity near zero of certain linear polynomial systems, as well as the partition regularity near zero of the equation $a+b=cd$. Let us recall some definitions\footnote{In \cite{indiani}, the definition of $AP$-set is weaker, as it does not require that $d\in A$ as we do, but this does not make any difference in their main results.}.

\begin{definition} Let $A\subseteq\R$. Then $A$ is said to be 
\begin{itemize} 
\item an $IP_{0}$-set if and only if for each $m\in\N$ there exists a finite sequence $\langle y_{n}\rangle_{n=1}^{m}\subseteq\R$ such that $FS\left(\langle y_{n}\rangle_{n=1}^{m}\right)\subseteq A$;
\item an $AP$-set if and only if for each $k\in\N$ there are $a,d\in \R$ such that $a,d,a+d,\dots,a+kd\in A$.
\end{itemize}\end{definition}

Notice that both properties tell us that $A$ contains a solution to a particular linear homogeneous system.

\begin{definition}[\cite{indiani}, Definition 5]\label{HL semi} Let $(S,+)$ be a dense subsemigroup of $\left( \R^{+},+\right)$. Then $(S,+)$ is an $HL$ semigroup if and only if $(S\cap (0,1),\cdot)$ is a subsemigroup of $((0,1),\cdot)$ and, for each $y\in S\cap (0,1)$ and for each $x\in S$, $\frac{x}{y}\in S$ and $yx\in S$. \end{definition}

The next theorem summarizes the two main results of \cite{indiani}: item (1) is the content of \cite[Theorem 12]{indiani}, and item (2) is the content of \cite[Theorem 14]{indiani}.
\begin{theorem}\label{teo indiani} Let $S\subseteq \R^{+}$. Then
\begin{enumerate}
\item if $(S,+)$ is a dense subsemigroup of $\left( \R^{+},+\right)$ such that $(S\cap (0,1),\cdot)$ is a subsemigroup of $((0,1),\cdot)$, then piecewise syndetic sets in $\left(S\cap (0,1),\cdot\right)$ are $IP_{0}$ and $AP$-rich near zero on $S$;
\item if $S$ is a $HL$ semigroup then equations $\sum_{t=1}^{n}x_{t}=\prod_{t=1}^{n}y_{t}$ are partition regular near zero on $S$.
\end{enumerate}
\end{theorem}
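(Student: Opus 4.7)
The approach for both parts is ultrafilter-theoretic, centered on the closed set $0^{+}=\bigcap_{\epsilon>0}\overline{S\cap(0,\epsilon)}\subseteq\beta S$, which forms a compact subsemigroup of $(\beta S,+)$ and houses the ultrafilters witnessing partition regularity near zero: a system is partition regular near zero on $S$ iff, for each coloring, some color class belonging to a suitable ultrafilter in $0^{+}$ contains a solution. A standing observation used throughout is that since $(S\cap(0,1),\cdot)$ is a semigroup of numbers strictly below $1$, each $S\cap(0,\epsilon)$ is a two-sided multiplicative ideal of $S\cap(0,1)$; hence $\overline{S\cap(0,\epsilon)}\supseteq K(\beta(S\cap(0,1)),\cdot)$ and therefore $K(\beta(S\cap(0,1)),\cdot)\subseteq 0^{+}$. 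In other words, the multiplicative smallest ideal already lives near zero.

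For (1), let $A$ be multiplicatively piecewise syndetic in $(S\cap(0,1),\cdot)$. By the standard ultrafilter characterization there exists $q\in K(\beta(S\cap(0,1)),\cdot)\cap\overline{A}$, automatically in $0^{+}$ by the observation above. The plan is then to transfer this multiplicatively central information into additive central information by producing a minimal idempotent $p\in(0^{+},+)$ with $A\in p$, essentially a Bergelson--Hindman style pass from $\cdot$-centrality to $+$-centrality exploiting the cohabitation of the two operations on $S$. Once $p$ is in hand, the Central Sets Theorem applied inside $(0^{+},+)$ (or more elementarily Hindman's finite sums theorem together with van der Waerden's theorem applied inside members of $p$) delivers, inside $A\cap(0,\epsilon)$ for every $\epsilon>0$, both arbitrarily long finite-sum systems (giving $IP_{0}$) and arbitrarily long arithmetic progressions (giving $AP$-richness).

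For (2), assume $S$ is an $HL$ semigroup, fix $\epsilon>0$ and a finite coloring $S=\bigcup_{i=1}^{k}A_{i}$. The plan is to select a multiplicative minimal idempotent $q\in K(\beta(S\cap(0,1)),\cdot)\subseteq 0^{+}$ and an additive minimal idempotent $p\in(0^{+},+)$ sharing a common color class $A_{i}\in p\cap q$. The $HL$ axioms $yx\in S$ and $x/y\in S$ for $y\in S\cap(0,1),\ x\in S$ are exactly what guarantee that left-multiplication by elements of $S\cap(0,1)$ is a well-defined operation preserving $0^{+}$, making such a compatible pair available. Multiplicative idempotency of $q$ on $A_{i}$ yields, by the finite products theorem, elements $y_{1},\dots,y_{n}\in A_{i}\cap(0,\epsilon)$ whose product $\pi=\prod_{t}y_{t}$ again lies in $A_{i}$. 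Additive idempotency of $p$ then produces, by the standard iterative intersection of $A_{i}$ with its translates, elements $x_{1},\dots,x_{n-1}\in A_{i}\cap(0,\pi/n)$ whose partial sums remain in $A_{i}$ and for which $x_{n}:=\pi-\sum_{t<n}x_{t}$ is forced into $A_{i}$; all $x_{t}$ are positive and bounded by $\pi<\epsilon$, so the whole configuration sits in $A_{i}\cap(0,\epsilon)$, as required.

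The principal obstacle is producing the compatible pair $(p,q)$: an additive minimal idempotent in $0^{+}$ and a multiplicative minimal idempotent sharing a common color class, with enough interaction to run the simultaneous sums-and-products selection. In $(\beta\N,+,\cdot)$ this rests on the classical fact that $K(\beta\N,\cdot)$ meets the closure of the additive idempotents, and in the near-zero setting one must reprove the analogue inside $0^{+}$; the $HL$-semigroup axioms are tailored exactly to let the ideal-chasing argument go through, so once the compatibility is secured, the rest of both parts is a routine iteration of the idempotent ultrafilter recipe.
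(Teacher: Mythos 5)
First, a point of reference: the paper does not prove Theorem \ref{teo indiani} at all --- it is quoted as a summary of the two main results of \cite{indiani}, and the paper's own contribution is the generalisation developed in Section \ref{applicazioni}. Measured against that machinery, your route is genuinely different: you follow the original Hindman--Leader / Kanti Patra--Moid Shaik strategy of combining an additive minimal idempotent with a multiplicative one inside $0^{+}$, whereas the paper works entirely inside the multiplicative structure. Its key observation (Theorem \ref{bil id} together with Corollary \ref{eccolo2}) is that any \emph{homogeneous} partition regular system --- in particular the additive equation $\sum_{t}x_{t}=z$ and the linear systems defining the $IP_{0}$ and $AP$ properties --- is witnessed by \emph{every} ultrafilter in $\overline{K(\beta S,\odot)}$, the closure of the smallest \emph{multiplicative} ideal, which already sits inside $0^{+}(S)$. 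A multiplicative idempotent in that closure therefore witnesses $\sum_{t}x_{t}=z$ and $z=\prod_{t}y_{t}$ simultaneously, and Lemma \ref{lemmasystem} glues the two equations along $z$. No additive idempotents, no Central Sets Theorem, and no compatibility problem ever arise.

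Read as a standalone proof, your proposal has two genuine gaps. The first you name yourself: the existence of a ``compatible pair'' --- an additive minimal idempotent $p\in 0^{+}$ and a multiplicative minimal idempotent $q$ sharing a colour class, with enough interaction to run the joint selection --- is the entire content of the hard direction, and asserting that the $HL$ axioms are ``tailored exactly'' to make the ideal-chasing go through is not a proof; this passage from multiplicative to additive centrality near zero is precisely what \cite{HL} and \cite{indiani} must establish, and it is what the paper's reorganisation is designed to bypass. The second gap is in the closing step of (2): additive idempotency of $p$ with $A_{i}\in p$ yields finite sums of elements of $A_{i}$ that land back in $A_{i}$, but it does not let you decompose a \emph{prescribed} target $\pi=\prod_{t}y_{t}$ into such a sum; picking $x_{1},\dots,x_{n-1}\in A_{i}\cap(0,\pi/n)$ and declaring that $x_{n}:=\pi-\sum_{t<n}x_{t}$ is ``forced into $A_{i}$'' fails, since nothing constrains that final difference to lie in $A_{i}$ (or even in $S$). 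The correct organisation --- used in \cite{rif13} and, in ultrafilter form, in the proof of Lemma \ref{lemmasystem} --- runs the other way: one first shows that the set $D=\{z\in A_{i}\mid\exists x_{1},\dots,x_{n}\in A_{i},\ \sum_{t}x_{t}=z\}$ belongs to the witnessing ultrafilter, and only then locates a monochromatic product inside $D$.
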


Our goal is to generalize Theorem \ref{teo indiani} in two directions: first, we aim to relax the hyphoteses on $S$; second, we want to prove the partition regularity near zero of a much larger class of polynomial systems. Finally, we want to show that partition regularity near zero and arbitrary partition regularity are closely related. These results will be obtained in Section \ref{applicazioni}, whilst in Section \ref{ult} we recall all the basic results about ultrafilters that we need.

\section{Ultrafilters near zero}\label{ult}

In this paper we assume the reader to be familiar with the fundamental 
properties of the space $\beta S$ of ultrafilters on a discrete\footnote{For this reason, in all this paper we assume $\R$ and its subsets to be endowed with the discrete topology.} semigroup $(S,\cdot)$. We refer to \cite{rif12} for a comprehensive introduction to $\beta S$ and its algebra. Here, we fix some notations and recall only the results that we need. By defining for each $A\subseteq S$,
\begin{equation*}
  \overline{A} = \{\U\in\beta S: A\in U\},
\end{equation*}
$\beta S$ becomes a compact Hausdorff topological space for which $\mathcal{B} = \{\overline{A}:A\subseteq S\}$ is a base of open-and-closed sets. By identifying each element $s\in S$ with the principal ultrafilter $\U_{s}:=\{A\subseteq S: s\in A\}$, $S$ is embeddable into $\beta S$ as a dense subspace. We will also use the following convention: if $A\subseteq B$, we will write $\beta A\subseteq \beta B$, identifying every $\U\in\beta A$ with its extension to $B$, namely with the ultrafilter $\{X\subseteq B\mid \exists Y\in\U \ \text{such that} \ Y\subseteq X\}$. Notice that, in this identification, $\beta A$ is identified with $\{\U\in\beta B\mid A\in\U\}$.

In general, when $(S,\cdot)$ is a semigroup, $\beta S$ can be made into a right-topological semigroup by the operation $\odot$ defined as
\begin{equation*}
  \U\odot \V = \big\{A\subseteq S: \{s\in S: \{t\in S: s\cdot t\in A\}\in \V\}\in\U\big\}.
\end{equation*}

A well known fact that we will often use is that $\left(\beta S,\odot\right)$ has a unique smallest bilateral ideal, that will be denoted by $K\left(\beta S,\odot\right)$. Such ideals always contain an ultrafilter $\U$ which is idempotent, namely $\U$ such that $\U\odot \U = \U$. 

As we are interested in the notion of partition regularity near zero, and for reasons that will be made precise in Proposition \ref{ultra part zero}, we will often talk about ultrafilters in the set $0^{+}(S)$, that is defined as follows\footnote{At the best of our knowledge, $0^{+}(S)$ has first been defined by Hindman and Leader in \cite{HL}, although only for sets $S$ so that $(S,+)$ is a dense subsemigroup of $(0,+\infty)$.}.

\begin{definition} Let $S\subseteq (0,1)$. We let 
\[0^{+}(S)=\{\U\in\beta S\mid \forall \varepsilon >0 \ (0,\varepsilon)\cap S\in\U\}.\] \end{definition}

For simplicity, we let \[0^{+}((0,1))=0^{+}.\]

Notice that, for all $S\subseteq (0,1)$, $0^{+}(S)=0^{+}\cap \beta S$. 

\begin{definition} Let $S\subseteq\R^{+}$. We say that $S$ is infinitesimal if $0\in cl(S)$. \end{definition}

Clearly, $0^{+}(S)\neq\emptyset$ if and only if $S$ is infinitesimal\footnote{The terminology comes from nonstandard analysis: $S$ is infinitesimal if and only if its nonstandard extensions contain infinitesimals.}.

Notice that, in general, any subsemigroup of $((0,1),\cdot)$ is infinitesimal: if $s\in S$, then $s^{n}\in S$ for every $n\in\N$, and $\lim\limits_{n\rightarrow +\infty}s^{n}=0$.

\begin{theorem} Let $(S,\cdot)$ be a subsemigroup of $((0,1),\cdot)$. The following facts hold:
\begin{enumerate}
\item $0^{+}(S)\neq\emptyset$;
\item $0^{+}(S)$ is a closed bilateral ideal of $\left(\beta S,\odot\right)$;
\item $K(\beta S,\odot)\subseteq 0^{+}(S)$.
\end{enumerate}

Moreover, the following facts are equivalent:
\begin{enumerate}
\item[(i)] $S$ is piecewise syndetic in $((0,1),\cdot)$;
\item[(ii)] $\exists\U\in K\left(\bzero,\odot\right)$ such that $S\in\U$;
\item[(iii)] $K\left(\bzero,\odot\right)\cap 0^{+}(S)=K\left(\beta S,\odot\right)$;
\item[(iv)] $K\left(\bzero,\odot\right)\cap 0^{+}(S)\neq\emptyset$.
\end{enumerate}
\end{theorem}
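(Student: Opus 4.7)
The plan is to handle items 1--3 directly from the definitions, then reduce the equivalences to two standard results about the smallest bilateral ideal, which will be the main substantive inputs.

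For item 1, the family $\{(0,\varepsilon)\cap S:\varepsilon>0\}$ is linearly ordered by inclusion, and each member is nonempty: for any $s\in S$ we have $s<1$, so the powers $s^{n}\in S$ converge to $0$ and eventually lie in $(0,\varepsilon)\cap S$. This family has the finite intersection property and extends to an ultrafilter in $0^{+}(S)$. For closedness in item 2, note that $0^{+}(S)=\bigcap_{\varepsilon>0}\overline{(0,\varepsilon)\cap S}$ is an intersection of basic clopen sets. To check that $0^{+}(S)$ is a bilateral ideal, fix $\U\in 0^{+}(S)$, $\V\in\beta S$, and $\varepsilon>0$. For $\V\odot\U$: for each $s\in S$ the set $(0,\varepsilon/s)\cap S$ lies in $\U$, and multiplication by $s$ sends it into $(0,\varepsilon)\cap S$; thus the relevant set equals $S\in\V$. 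For $\U\odot\V$: if $s\in(0,\varepsilon)\cap S$ and $t\in S$, then $st<s<\varepsilon$, so $\{t\in S:st\in(0,\varepsilon)\cap S\}=S\in\V$, and the outer set contains $(0,\varepsilon)\cap S\in\U$. Item 3 is then immediate: the smallest bilateral ideal $K(\beta S,\odot)$ is contained in any bilateral ideal, in particular in $0^{+}(S)$.

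For the equivalences, the first key input is the standard characterization (Hindman--Strauss, Theorem 4.40) that in any discrete semigroup $(T,\cdot)$, a set $A\subseteq T$ is piecewise syndetic iff $\overline{A}\cap K(\beta T,\odot)\neq\emptyset$; applied to $T=((0,1),\cdot)$ this yields (i)$\iff$(ii). For (ii)$\Rightarrow$(iv), apply item 3 to the semigroup $((0,1),\cdot)$: this gives $K(\beta(0,1)_{d},\odot)\subseteq 0^{+}$, so any $\U$ witnessing (ii) automatically lies in $0^{+}\cap\beta S=0^{+}(S)$. The converse (iv)$\Rightarrow$(ii) is trivial since $\U\in 0^{+}(S)\subseteq\beta S$ forces $S\in\U$, and (iii)$\Rightarrow$(iv) is immediate from the nonemptiness of $K(\beta S,\odot)$.

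The main obstacle is the implication (iv)$\Rightarrow$(iii). Here I would invoke the second standard result (Hindman--Strauss, Theorem 1.65): if $T$ is a compact right-topological subsemigroup of a right-topological semigroup $G$ with $K(G,\odot)\cap T\neq\emptyset$, then $K(T,\odot)=K(G,\odot)\cap T$. Taking $T=\beta S$ and $G=\beta(0,1)_{d}$, the hypothesis (iv) together with the inclusion $0^{+}(S)\subseteq\beta S$ supplies the required nonempty intersection, yielding $K(\beta S,\odot)=K(\beta(0,1)_{d},\odot)\cap\beta S$. Intersecting both sides with $0^{+}(S)$ and using item 3 to see that the left-hand side is unchanged (since $K(\beta S,\odot)\subseteq 0^{+}(S)$) gives (iii). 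The difficulty lies entirely in recognizing and applying this subsemigroup--minimality lemma; everything else is bookkeeping.
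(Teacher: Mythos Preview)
Your proof is correct and follows essentially the same approach as the paper: items 1--3 are handled the same way, and the equivalences rest on the same two standard inputs (Hindman--Strauss, Theorems~4.40 and~1.65). The only cosmetic difference is in the routing of the implications---the paper closes the cycle via $(ii)\Rightarrow(iii)\Rightarrow(iv)\Rightarrow(ii)$, while you go $(ii)\Rightarrow(iv)\Rightarrow(iii)$, adding the pleasant observation that $(ii)\Rightarrow(iv)$ follows by applying item~3 to the ambient semigroup $((0,1),\cdot)$ itself.
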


\begin{proof} $(1)$ We already observed that, in this case, $S$ is infinitesimal, hence $0^{+}(S)\neq\emptyset$.

$(2)$ To show that $0^{+}(S)$ is closed it sufficies to notice that 

\[0^{+}(S)=\bigcap_{\varepsilon>0}\overline{(0,\varepsilon)\cap S}.\]

Let us prove that it is a bilateral ideal of $K(\beta S,\odot)$. Let $\U\in 0^{+}(S),\V\in \beta S$. Let $\varepsilon>0$. To prove that $(0,\varepsilon)\cap S\in \V\odot\U$ let us notice that, for all $r\in (0,1)\cap S$, $J_{r}=\{s\in S\mid sr\in (0,\varepsilon)\cap S\}\supseteq \{s\in S\mid s\in\left(0,\varepsilon\right)\}\in\U$ as $r\in (0,1)\cap S$, $\U\in 0^{+}(S)$, hence
\[\{r\in S\mid J_{r}\in\U\}\supseteq (0,1)\cap S\in\V.\]
To prove that $(0,\varepsilon)\cap S\in \U\odot\V$ let us notice that, for all $s\in(0,1)$ with $s\leq\varepsilon$, the set $I_{s}=\{r\in S\mid sr\in (0,\varepsilon)\cap S\}=S$, hence
\[\{s\in S\mid I_{r}\in\V\}\supseteq (0,\varepsilon)\cap S\in\U.\]

$(3)$ This is a straighforward consequence of $(2)$, as $K(\beta S,\odot)$ is by definition the smallest bilateral ideal of $(\beta S,\odot)$ with respect to inclusion.

\vspace{\baselineskip}
Let us now prove the equivalence between facts $[i]-[iv]$.
\vspace{\baselineskip}

$(i)\Leftrightarrow (ii)$ This is a particular case of \cite[Theorem 4.40]{rif12}.

$(ii)\Rightarrow (iii)$ As we identify $\overline{S}$ with $\beta S$, we have that $\beta S\cap K\left(\bzero,\odot\right)\neq\emptyset$, so by \cite[Theorem 1.65.(1)]{rif12} $S\cap K\left(\bzero,\odot\right)=K\left(\beta S,\odot\right)$. By $(ii)$, $K\left(\beta S,\odot\right)\subseteq 0^{+}(S)$, so 
\begin{align*} K\left(\beta S,\odot\right)& =0^{+}(S)\cap K\left(\beta S,\odot\right) \\ & =0^{+}(S)\cap \beta S \cap K\left (\beta (0,1),\odot\right) \\ &= 0^{+}(S)\cap K(\beta (0,1),\odot).\end{align*}

$(iii)\Rightarrow (iv)$ This is trivial.

$(iv)\Rightarrow (ii)$ Trivially, if $\U\in K\left(\bzero,\odot\right)\cap 0^{+}(S)$ then $S\in\U$. \end{proof}

In Section \ref{applicazioni} we will work with two kinds of semigroups $(S,\cdot)$, although with very similar methods based on ultrafilters.

The first kind are subsemigroups $(S,\cdot)$ of $((0,1),\cdot)$ with $S$ piecewise syndetic in $((0,1),\cdot)$. The second are the $\mathbb{Q}$-infinitesimal semigroups, which are defined as follows.

\begin{definition} Let $S\subseteq (0,1)$. We say that is subsemigroup $(S,\cdot)$ of $((0,1),\cdot)$ is $\mathbb{Q}$-infinitesimal if for all $s\in S$, for all $q\in\mathbb{Q}^{+}$, if $qs<1$ then $qs\in S$. \end{definition}

In \cite{indiani}, Kanti Patra and Moid Shaik framed their results about the partition regularity near zero in terms of $HL$-semigroups (see Definition \ref{HL semi}). The setting of $\mathbb{Q}$-infinitesimal semigroups is more general, in the following sense.

\begin{proposition} If $(S,+)$ is an $HL$-semigroup, then $(S\cap (0,1),\cdot)$ is $\mathbb{Q}$-infinitesimal.\end{proposition}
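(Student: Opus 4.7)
The plan is to unpack both definitions and isolate the one nontrivial containment. Writing $T := S \cap (0,1)$, I must show that for every $s \in T$ and every positive rational $q = m/n$ with $qs < 1$, the product $qs$ lies in $T$. Membership in $(0,1)$ is automatic from $0 < qs < 1$, so only $qs \in S$ needs proof. Using the additive semigroup structure of $(S,+)$, the element $ms = s + s + \cdots + s$ (an $m$-fold sum) belongs to $S$, so the task reduces to dividing an element of $S$ by the positive integer $n$ and staying inside $S$. If I can produce $1/n \in T$, then the $HL$ closure property $yx \in S$, applied with $y = 1/n$ and $x = ms$, immediately yields $qs = (ms)/n \in S$; the case $n = 1$ needs no such trick since there $qs = ms$ already lies in $S$.

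The heart of the argument is therefore to show $1/n \in S$ for every integer $n \geq 2$, and this is where the density of $(S,+)$ in $(\R^{+},+)$ enters. I would pick $s' \in S$ with $0 < s' < 1/n$ (available by density), observe that $ns' \in S$ by iterating $+$, and note $ns' < 1$, so $ns' \in T$. Then the other $HL$ closure property $x/y \in S$, applied with $x = s' \in S$ and $y = ns' \in T$, yields $1/n = s'/(ns') \in S$, and since $1/n < 1$ one has $1/n \in T$ as needed.

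The whole proof is a short interplay between the two $HL$ closure properties, the additive semigroup law, and additive density; I do not expect any real obstacle. The one point that requires a word of care is the edge case $n = 1$, which must be handled separately because $1 \notin (0,1)$ and so cannot play the role of $y$ in the $HL$ axiom, but here the conclusion is immediate from iterated addition alone.
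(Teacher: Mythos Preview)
Your proof is correct. The paper's argument reaches the same conclusion by a slightly different route: instead of first proving the lemma $1/n \in S$ via density and then multiplying, it writes $qs$ directly as a single quotient
\[
qs \;=\; \frac{s^{k+1}\,m}{s^{k}\,n}\qquad\text{(with your convention }q=m/n\text{)},
\]
where $k$ is chosen large enough that $s^{k}n<1$. Here the powers $s^{k},s^{k+1}$ lie in $T=S\cap(0,1)$ because $(T,\cdot)$ is a semigroup, the numerator $s^{k+1}m$ lies in $S$ by iterated addition, and the denominator $s^{k}n$ lies in $T$; one application of the $HL$ quotient property then gives $qs\in S$. So the paper uses the multiplicative semigroup structure of $T$ (powers of $s$) to manufacture small elements, whereas you use the additive density of $S$ to do so. Your approach has the mild advantage of isolating the reusable fact $1/n\in T$ for all $n\ge 2$; the paper's has the mild advantage of never invoking density explicitly and needing only a single closure application. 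Both are short and equally valid.
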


\begin{proof} As $S$ is an $HL$-semigroup, $(S\cap (0,1),\cdot)$ is a semigroup. Let $s\in S\cap (0,1)$ and let $q=\frac{n}{m}\in\mathbb{Q}^{+}$, with $qs<1,n,m\in\N$. Let $k\in\N$ be such that $s^{k}m<1$. Now, $s^{k},s^{k+1}\in S\cap (0,1)$ as $(S\cap (0,1),\cdot)$ is a semigroup, $s^{k}m \in S\cap (0,1)$ as $(S,+)$ is a semigroup and $s^{k}m<1$ and $s^{k}n\in S$ as $s^{k}\in S$ and $(S,+)$ is a semigroup. Hence $\frac{s^{k+1}n}{s^{k}m}=s\frac{n}{m}\in S\cap (0,1)$ as $S$ is an $HL$-semigroup.\end{proof}

On the contrary, not all $\Q$-infinitesimal semigroups $(S,\cdot)$ are of the form $(T\cap (0,1),\cdot)$ for some $HL$-semigroup $T$. For example, take $$S=\{q\pi^{z}\mid q\in\mathbb{Q}^{+},z\in\omega\}\cap (0,1).$$ Then $S$ is clearly $\mathbb{Q}$-infinitesimal, but it is not of the form $(T\cap (0,1),\cdot)$ for some $HL$-semigroup $T$. In fact, otherwise, as $\frac{1}{2},\frac{\pi}{8}\in T\cap (0,1)$ and $\frac{1}{2}+\frac{\pi}{8}\in (0,1)$, we would find $q\in\mathbb{Q}^{+}, z\in\omega$ such that $$\frac{1}{2}+\frac{\pi}{8}=q\pi^{z},$$ against the trascendence of $\pi$.

Finally, let us observe that being piecewise syndetic in $\left((0,1),\cdot\right)$ or being $\mathbb{Q}$-infinitesimal are distinct notions: $\mathbb{Q}^{+}$ is $\mathbb{Q}$-infinitesimal but $\mathbb{Q}\cap (0,1)$ is not piecewise syndetic\footnote{We will provide an alternative proof of this fact in Example \ref{razionali}.} in $(0,1)$; conversely, $\left(0,\frac{1}{2}\right)$ is piecewise syndetic in $((0,1),\cdot)$ but it is not $\mathbb{Q}$-infinitesimal.

Is is well known that ultrafilters and their algebra provide an useful tool to study partition regular properties. We specify this well known general result (see for example \cite[Theorem 5.7]{rif12}) to our present framework:

\begin{theorem}\label{aaaa} Let $(S,\cdot)$ be a subsemigroup of $\left(\R^{+},\cdot\right)$. Let $P_{1}\left(x_{1},\dots,x_{n}\right),\dots,P_{m}\left(x_{1},\dots,x_{n}\right)\in \R\left[x_{1},\dots,x_{n}\right]$. Let 
\[\sigma\left(x_{1},\dots,x_{n}\right)=\begin{cases}  P_{1}\left(x_{1},\dots,x_{n}\right),\\ \hspace{1.2cm}\vdots \\ P_{m}\left(x_{1},\dots,x_{n}\right). \end{cases}\]
Then the system $\sigma\left(x_{1},\dots,x_{n}\right)=0$ is partition regular on $S$ if an only if there exists $\U\in\beta S$ such that for all $A\in\U$ there are $a_{1},\dots,a_{n}\in A$ with $\sigma\left(a_{1},\dots,a_{n}\right)=0$. \end{theorem}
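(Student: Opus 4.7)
The plan is to prove both directions separately; this is a version of the standard ``ultrafilter characterization'' of partition regularity, so no deep new idea is needed, only a careful set-up in the present multi-equation polynomial setting.

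For the easy direction, assume that $\U\in\beta S$ witnesses the property, that is, every $A\in\U$ contains elements $a_{1},\dots,a_{n}$ with $\sigma(a_{1},\dots,a_{n})=0$. Given any finite partition $S=A_{1}\cup\dots\cup A_{k}$, since $\U$ is an ultrafilter exactly one $A_{j}$ belongs to $\U$; then by hypothesis $A_{j}$ already contains a solution to $\sigma=0$. Hence $\sigma(x_{1},\dots,x_{n})=0$ is partition regular on $S$.

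For the nontrivial direction, assume $\sigma(x_{1},\dots,x_{n})=0$ is partition regular on $S$. For each subset $A\subseteq S$, say that $A$ is \emph{good} if there exist $a_{1},\dots,a_{n}\in A$ with $\sigma(a_{1},\dots,a_{n})=0$, and \emph{bad} otherwise. For every finite partition $\mathcal{P}=\{A_{1},\dots,A_{k}\}$ of $S$, set
\[
X_{\mathcal{P}}=\bigcup\{\overline{A_{i}}: A_{i}\in\mathcal{P} \text{ and } A_{i} \text{ is good}\}.
\]
Each $X_{\mathcal{P}}$ is a closed subset of $\beta S$, and it is nonempty by partition regularity (at least one block of $\mathcal{P}$ is good, and principal ultrafilters on it lie in $\overline{A_{i}}$). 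The family $\{X_{\mathcal{P}}\}_{\mathcal{P}}$ has the finite intersection property: given finitely many partitions $\mathcal{P}_{1},\dots,\mathcal{P}_{r}$, their common refinement $\mathcal{Q}$ satisfies $X_{\mathcal{Q}}\subseteq\bigcap_{i=1}^{r} X_{\mathcal{P}_{i}}$, since any good block of $\mathcal{Q}$ is contained in some good block of each $\mathcal{P}_{i}$.

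By compactness of $\beta S$, the intersection $\bigcap_{\mathcal{P}} X_{\mathcal{P}}$ is nonempty; pick $\U$ in it. I claim that every $A\in\U$ is good. Indeed, if some $A\in\U$ were bad, consider the partition $\mathcal{P}=\{A, S\setminus A\}$. Then $X_{\mathcal{P}}\subseteq\overline{S\setminus A}$ (since $A$ is bad, only the other block can contribute), so $\U\in\overline{S\setminus A}$, i.e.\ $S\setminus A\in\U$, contradicting $A\in\U$. The only place where one should be slightly careful is the finite intersection property step: one must check that passing to a common refinement preserves goodness in the sense above, which is immediate because goodness is monotone (a superset of a good set is good). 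No genuine obstacle arises, and the argument applies uniformly to the system $\sigma$ since goodness is defined directly in terms of $\sigma$ rather than a single polynomial.
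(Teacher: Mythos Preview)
Your proof is correct. The paper does not actually supply its own proof of this theorem: it simply records the statement as a specialisation of \cite[Theorem~5.7]{rif12} (Hindman--Strauss) to the present setting and moves on. Your argument is precisely the standard compactness proof underlying that reference---defining the closed sets $X_{\mathcal P}$ indexed by finite partitions, using refinement plus upward-closure of ``good'' to get the finite intersection property, and extracting $\U$ by compactness of $\beta S$---so there is no methodological difference to discuss. One cosmetic remark: in the last step, when you form the partition $\{A,\,S\setminus A\}$ you are implicitly assuming $A\neq S$; if $A=S$ were bad then already the trivial one-block partition would violate partition regularity, so the edge case is harmless, but you might add a word to that effect.
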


\begin{definition} Under the conditions of Theorem \ref{aaaa}, we say that $\U$ witnesses the partition regularity of the system $\sigma\left(x_{1},\dots,x_{n}\right)=0$, and we call it a $\iota_{\sigma}$-ultrafilter. \end{definition}

Let us observe that, when specified to the partition regularity near zero, Theorem \ref{aaaa} reads as follows.

\begin{proposition}\label{ultra part zero} Let $(S,\cdot)$ be a subsemigroup of $\left((0,1),\cdot\right)$. Let $P_{1}\left(x_{1},\dots,x_{n}\right)$,$\dots$,$P_{m}\left(x_{1},\dots,x_{n}\right)\in\mathbb{R}\left[x_{1},\dots,x_{n}\right]$. Let
\[\sigma\left(x_{1},\dots,x_{n}\right)=\begin{cases}  P_{1}\left(x_{1},\dots,x_{n}\right),\\ \hspace{1.2cm}\vdots \\ P_{m}\left(x_{1},\dots,x_{n}\right). \end{cases}\]
The system of equations $\sigma\left(x_{1},\dots,x_{n}\right)=0$ is partition regular near zero on $S$ if and only there exists a $\iota_{\sigma}$-ultrafilter in $0^{+}(S)$.\end{proposition}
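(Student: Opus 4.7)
The plan is to mimic, in the near-zero setting, the standard compactness/ultrafilter argument that underlies Theorem~\ref{aaaa}. The direction from ultrafilter to partition regularity near zero is essentially free: if $\U\in 0^{+}(S)$ is a $\iota_{\sigma}$-ultrafilter and $S=A_{1}\cup\dots\cup A_{k}$ is any finite coloring, pick the unique $j\leq k$ with $A_{j}\in\U$; since $(0,\varepsilon)\cap S\in\U$ by the very definition of $0^{+}(S)$, the intersection $A_{j}\cap(0,\varepsilon)$ is again in $\U$, and by the $\iota_{\sigma}$-property it must contain $a_{1},\ldots,a_{n}$ with $\sigma(a_{1},\ldots,a_{n})=0$.

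For the converse, I would introduce the family
\[
\mathcal{P}^{0}_{\sigma}=\{A\subseteq S : A \text{ is partition regular near zero for } \sigma \text{ on } S\},
\]
and look for an ultrafilter $\U\in 0^{+}(S)$ all of whose members lie in $\mathcal{P}^{0}_{\sigma}$; such an ultrafilter is automatically $\iota_{\sigma}$, because applying the trivial partition forces every $A\in\mathcal{P}^{0}_{\sigma}$ to contain a solution. Two simple observations do the work. First, \emph{partition stability}: if $A\in\mathcal{P}^{0}_{\sigma}$ and $A=B_{1}\cup\dots\cup B_{k}$, then some $B_{i}\in\mathcal{P}^{0}_{\sigma}$; for otherwise each $B_{i}$ admits a bad finite partition $B_{i}=\bigcup_{j}C_{i,j}$ with threshold $\varepsilon_{i}>0$, and the common refinement $A=\bigcup_{i,j}C_{i,j}$ together with $\varepsilon=\min_{i}\varepsilon_{i}$ witnesses $A\notin\mathcal{P}^{0}_{\sigma}$. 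Second, $S\in\mathcal{P}^{0}_{\sigma}$ by hypothesis, whereas $S\setminus(0,\varepsilon)\notin\mathcal{P}^{0}_{\sigma}$ for every $\varepsilon>0$, since its intersection with $(0,\delta)$ is empty as soon as $\delta\leq\varepsilon$.

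The ultrafilter is then produced by a compactness argument in $\beta S$: the family of closed sets
\[
\bigl\{\overline{(0,\varepsilon)\cap S}:\varepsilon>0\bigr\}\cup\bigl\{\overline{S\setminus A}:A\subseteq S,\ A\notin\mathcal{P}^{0}_{\sigma}\bigr\}
\]
enjoys the finite intersection property: given $\varepsilon_{1},\ldots,\varepsilon_{\ell}>0$ and $A_{1},\ldots,A_{k}\notin\mathcal{P}^{0}_{\sigma}$, set $\varepsilon=\min_{i}\varepsilon_{i}$; if $(0,\varepsilon)\cap S$ were covered by $A_{1}\cup\dots\cup A_{k}$, then $S$ would split as $(S\setminus(0,\varepsilon))\cup A_{1}\cup\dots\cup A_{k}$ with every cell outside $\mathcal{P}^{0}_{\sigma}$, contradicting partition stability applied to $S$. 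Compactness of $\beta S$ thus yields an ultrafilter $\U$ in the intersection, which lies in $0^{+}(S)$ and satisfies $\U\subseteq\mathcal{P}^{0}_{\sigma}$ as required. The one step that really deserves care is partition stability, where $\varepsilon$-thresholds coming from different cells must be merged compatibly; this merging is also what makes the ``near-zero'' constraint fit seamlessly into the compactness skeleton via the exclusion of the sets $S\setminus(0,\varepsilon)$ from $\mathcal{P}^{0}_{\sigma}$.
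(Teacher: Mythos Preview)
Your argument is correct. The paper does not actually supply a proof of this proposition: it is introduced with the sentence ``when specified to the partition regularity near zero, Theorem~\ref{aaaa} reads as follows'', and left at that. What you have written is a self-contained unfolding of the standard compactness argument behind Theorem~\ref{aaaa}, with the extra ingredient that the sets $S\setminus(0,\varepsilon)$ are excluded from $\mathcal{P}^{0}_{\sigma}$, which is exactly what forces the resulting ultrafilter into $0^{+}(S)$.

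A slightly shorter route, closer to the paper's one-line remark, would be to use Theorem~\ref{aaaa} as a black box: observe that partition regularity near zero on $S$ is equivalent to ordinary partition regularity on $S\cap(0,\varepsilon)$ for every $\varepsilon>0$, so Theorem~\ref{aaaa} produces a $\iota_{\sigma}$-ultrafilter in each closed set $\overline{(0,\varepsilon)\cap S}$; since the set of $\iota_{\sigma}$-ultrafilters is itself closed in $\beta S$ and the sets $\overline{(0,\varepsilon)\cap S}$ are nested, compactness gives a $\iota_{\sigma}$-ultrafilter in $\bigcap_{\varepsilon>0}\overline{(0,\varepsilon)\cap S}=0^{+}(S)$. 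Your version has the merit of being completely explicit. One small cosmetic point: in the FIP step you write $S=(S\setminus(0,\varepsilon))\cup A_{1}\cup\dots\cup A_{k}$, which is a cover rather than a partition; this is harmless since subsets of sets outside $\mathcal{P}^{0}_{\sigma}$ remain outside $\mathcal{P}^{0}_{\sigma}$, so one can disjointify first.
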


We will use two known properties of $\iota_{\sigma}$-ultrafilters. The first, whose routine proof can be found for example in \cite[Example 5.6]{monads}, involves homogeneous systems, namely those systems $\sigma\left(x_{1},\dots,x_{n}\right)$ of polynomial equations with real coefficients such that for all $a,b_{1},\dots,b_{n}\in \R$ one has that $\sigma\left(b_{1},\dots,b_{n}\right)=0$ if and only if $\sigma\left(ab_{1},\dots,ab_{n}\right)=0$.

\begin{theorem}\label{bil id} Let $(S,\cdot)$ be a subsemigroup of $\left(\R^{+},\cdot\right)$. Let $P_{1}\left(x_{1},\dots,x_{n}\right),\dots,P_{m}\left(x_{1},\dots,x_{n}\right)\in \R\left[x_{1},\dots,x_{n}\right]$ be homogeneous. Let
\[\sigma\left(x_{1},\dots,x_{n}\right)=\begin{cases}  P_{1}\left(x_{1},\dots,x_{n}\right),\\ \hspace{1.2cm}\vdots \\ P_{m}\left(x_{1},\dots,x_{n}\right). \end{cases}\]
Assume that the system of equations $\sigma\left(x_{1},\dots,x_{n}\right)=0$ is partition regular on $S$. Then the set
\[I_{\sigma}=\{\U\in\beta S\mid \U \ \text{is a} \ \iota_{\sigma}\text{-ultrafilter}\}\] is a closed bilateral ideal in $\left(\beta S,\odot\right)$.\end{theorem}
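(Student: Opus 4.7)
The plan is to verify in sequence that $I_\sigma$ is nonempty, closed in $\beta S$, and absorbs products on both sides in $(\beta S, \odot)$. Nonemptiness is immediate from Theorem~\ref{aaaa}: the hypothesis that $\sigma = 0$ is partition regular on $S$ produces a $\iota_\sigma$-ultrafilter in $\beta S$. Closedness will follow by showing the complement is open: if $\U \notin I_\sigma$ then some $A \in \U$ contains no solution of $\sigma$, so the basic neighbourhood $\overline{A}$ of $\U$ is disjoint from $I_\sigma$, because every $\V \in \overline{A}$ inherits $A$ as the same witness of failure.

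The core of the argument is the two-sided ideal property, and the crucial ingredient throughout is homogeneity, since it allows any solution to be rescaled by a common factor and remain a solution. Fix $\U \in I_\sigma$ and $\V \in \beta S$. For $\U \odot \V \in I_\sigma$, take any $A \in \U \odot \V$ and set $B = \{s \in S : \{t \in S : st \in A\} \in \V\}$; then $B \in \U$, so by hypothesis $B$ contains $s_1, \dots, s_n$ with $\sigma(s_1, \dots, s_n) = 0$. The finite intersection $\bigcap_{i=1}^n \{t \in S : s_i t \in A\}$ belongs to $\V$, hence is nonempty, and any $t$ in it produces $s_1 t, \dots, s_n t \in A$; homogeneity then gives $\sigma(s_1 t, \dots, s_n t) = 0$, so $A$ contains a solution. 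For $\V \odot \U \in I_\sigma$, take $A \in \V \odot \U$, pick any $s$ in the $\V$-large set $\{s \in S : \{t \in S : st \in A\} \in \U\}$, and use $\U \in I_\sigma$ to find $t_1, \dots, t_n \in \{t : st \in A\}$ with $\sigma(t_1, \dots, t_n) = 0$; then $s t_1, \dots, s t_n \in A$ solve $\sigma$, again by homogeneity.

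The main obstacle is the left-multiplication step: there the solution initially lives in $B$ rather than in $A$, and all $n$ coordinates of the tuple $(s_1,\dots,s_n)$ must be simultaneously translated into $A$ by a single multiplier $t$. The finite-intersection argument inside $\V$ supplies exactly such a common $t$, and homogeneity guarantees that the rescaled tuple is still a solution; once this observation is made, the right-multiplication case and the closedness step are routine.
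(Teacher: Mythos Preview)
Your proof is correct. The paper itself does not give an argument for this theorem; it simply states that the proof is routine and can be found in \cite[Example 5.6]{monads}. Your direct verification---nonemptiness via Theorem~\ref{aaaa}, closedness via a basic-open witness of failure, and absorption on both sides by rescaling solutions using the finite intersection property of $\V$---is exactly the standard routine the paper alludes to, and all steps go through as written.
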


The second result, which is just a reformulation of \cite[Lemma 2.1]{advances}, allows us to mix different partition regular systems to produce new ones. We give an explicit proof of the present formulation.

\begin{lemma}\label{lemmasystem}Let $(S,\cdot)$ be a subsemigroup of $\left(\R^{+},\cdot\right)$. Let $P_{1}\left(x_{1},\dots,x_{n}\right)$,$\dots$,$P_{m}\left(x_{1},\dots,x_{n}\right)\in \R\left[x_{1},\dots,x_{n}\right], Q_{1}\left(y_{1},\dots,y_{l}\right),\dots,$ $Q_{t}\left(y_{1},\dots,y_{l}\right)\in \R\left[y_{1},\dots,y_{l}\right]$. Let $\U\in\beta S$ be a witness of the partion regularity of the systems of equations $\sigma_{1}\left(x_{1},\dots,x_{n}\right)=0$, $\sigma_{2}\left(y_{1},\dots,y_{l}\right)=0$, where 
\[\sigma_{1}\left(x_{1},\dots,x_{n}\right)=\begin{cases}  P_{1}\left(x_{1},\dots,x_{n}\right),\\ \hspace{1.2cm}\vdots \\ P_{m}\left(x_{1},\dots,x_{n}\right) \end{cases}\]
and
\[\sigma_{2}\left(y_{1},y_{2}\dots,y_{l}\right)=\begin{cases}  Q_{1}\left(y_{1},\dots,y_{l}\right),\\ \hspace{1.2cm}\vdots \\ Q_{t}\left(y_{1},\dots,y_{l}\right). \end{cases}\]
Then $\U$ witnesses also the partition regularity of $\sigma_{3}\left(x_{1},\dots,x_{n},y_{1},\dots,y_{l}\right)=0$, where
\[\sigma_{3}\left(x_{1},\dots,x_{n},y_{1},\dots,y_{l}\right)=\begin{cases}  P_{1}\left(x_{1},\dots,x_{n}\right),\\ \hspace{1.2cm}\vdots \\ P_{m}\left(x_{1},\dots,x_{n}\right), \\Q_{1}\left(y_{1},\dots,y_{l}\right),\\ \hspace{1.2cm}\vdots \\ Q_{t}\left(y_{1},\dots,y_{l}\right),\\ x_{1}-y_{1}.\end{cases}\]
\end{lemma}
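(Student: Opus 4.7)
}
By Theorem \ref{aaaa}, showing that $\U$ witnesses $\sigma_{3}$ amounts to showing that for every $A\in\U$ there exist $a_{1},\dots,a_{n},b_{1},\dots,b_{l}\in A$ satisfying simultaneously $P_{i}(a_{1},\dots,a_{n})=0$ for all $i\leq m$, $Q_{j}(b_{1},\dots,b_{l})=0$ for all $j\leq t$, and $a_{1}=b_{1}$. So the plan is: fix an arbitrary $A\in\U$ and manufacture such a tuple.

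The idea is to use $\U$ twice, via a ``restriction'' trick. First I would define the set of ``good'' first coordinates for $\sigma_{2}$ inside $A$, namely
\[
B=\bigl\{s\in A\,\bigm|\,\exists\,b_{2},\dots,b_{l}\in A\text{ such that }\sigma_{2}(s,b_{2},\dots,b_{l})=0\bigr\}.
\]
The key claim is that $B\in\U$. I would prove this by contradiction: if $B\notin\U$, then $A\setminus B\in\U$, and applying the assumption that $\U$ witnesses $\sigma_{2}$ to the member $A\setminus B$ produces $c_{1},\dots,c_{l}\in A\setminus B\subseteq A$ with $\sigma_{2}(c_{1},\dots,c_{l})=0$; but this immediately forces $c_{1}\in B$ by the very definition of $B$, contradicting $c_{1}\in A\setminus B$.

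Now that $B\in\U$, I would apply the assumption that $\U$ witnesses $\sigma_{1}$ to the member $B$, obtaining $a_{1},\dots,a_{n}\in B\subseteq A$ with $\sigma_{1}(a_{1},\dots,a_{n})=0$. Since in particular $a_{1}\in B$, the definition of $B$ provides $b_{2},\dots,b_{l}\in A$ with $\sigma_{2}(a_{1},b_{2},\dots,b_{l})=0$. Setting $b_{1}:=a_{1}$ yields all variables inside $A$ and verifies every row of $\sigma_{3}$, including the coupling $x_{1}-y_{1}=0$. Applying Theorem \ref{aaaa} in the reverse direction finishes the proof.

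The argument is essentially routine; the only nontrivial ingredient is the observation that the set $B$ of admissible first coordinates lies in $\U$, and even that is a one-line contradiction using the defining property of $B$. No algebraic structure on $\beta S$ beyond Theorem \ref{aaaa} is required, which is why this mixing lemma will be a convenient black box in Section \ref{applicazioni}.
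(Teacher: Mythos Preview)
Your proposal is correct and follows essentially the same approach as the paper's proof: both arguments define the set of ``good first coordinates'' for one of the systems inside $A$, show it lies in $\U$ via the one-line contradiction you describe, and then use the witness property for the other system to pick a common first coordinate. The paper phrases this symmetrically by defining the analogous sets $I_{A}$ and $J_{A}$ for $\sigma_{1}$ and $\sigma_{2}$ and intersecting them, whereas you short-circuit one of the two steps by applying the $\sigma_{1}$-witness property directly to $B$; this is a cosmetic difference only.
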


\begin{proof} As $\U$ is a $\iota_{\sigma_{1}}$-ultrafilter, necessarily for all $A\in\U$ the set 
\[I_{A}:=\{a\in A\mid \exists a_{2},\dots,a_{n}\in A \ \text{such that} \ \sigma_{1}\left(a, a_{2},\dots,a_{n}\right)=0\}\in\U,\]
as otherwise $B=A\setminus I_{A}$ would belong to $\U$, but $B$ does not contain any solution to $\sigma_{1}\left(x_{1},\dots,x_{n}\right)=0$ by construction. In a similar way, as $\U$ is a $\iota_{\sigma_{2}}$-ultrafilter, necessarily for all $A\in\U$ the set 
\[J_{A}:=\{a\in A\mid \exists a_{2},\dots,a_{l}\in A \ \text{such that} \ \sigma_{2}\left(a, a_{2},\dots,a_{l}\right)=0\}\in\U.\]
Hence $I_{A}\cap J_{A}$ is nonempty, as $I_{A}\cap J_{A}\in\U$, and $I_{A}\cap J_{A}$ contains solutions to $\sigma_{3}\left(x_{1},\dots,x_{n},y_{1},\dots,y_{l}\right)=0$ by construction. \end{proof}

Finally, we close this section with a simple known observation about ultrafilters in $\beta\mathbb{R}$, namely\footnote{In \cite[Definition 2.4]{HL}, the ultrafilters living at a finite point were also called bounded and defined as those ultrafilters containing a bounded set, but it is a routine proof to show that this definition coincides with ours; ultrafilters that live at infinity were called unbounded. In \cite{monads}, ultrafilters that live at infinity were called infinite, whilst the non-principal ultrafilters living at a finite point were called quasi-principal. This naming comes from the identification of ultrafilters with nonstandard points in enlarged extensions $^{\ast}\R$ of $\R$, done by identifying every ultrafilter $\U\in\beta\R$ with $\mu(\U)=\bigcap_{A\in\U}\,^{\ast}A$. Via this identification, one has that $\U$ lives at $r\in\R$ if and only if $\mu(\U)$ consists solely of finite hyperreals at an infinitesimal distance from $r$, and $\U$ lives at infinity if and only if $\mu(\U)$ consists solely of infinite hyperreals.} that ultrafilters in $\beta\mathbb{R}$ are of two kinds:

\begin{itemize}
\item those that live at a finite point, namely those ultrafilters $\U\in\bR$ for which there exists $r\in\mathbb{R}$ such that for all $\varepsilon>0$ the set $(r-\varepsilon, r+\varepsilon)\in\U$. In this case, we say that $\U$ is infinitesimaly close to $r$;
\item those that live at infinity, namely all those ultrafilters that do not live at a finite point; equivalently, $\U$ lives at infinite if and only if for every $r\in\mathbb{R}$ the set $\{x\in\R\mid |x|\geq |r|\}\in\U$. 
\end{itemize}

Of course, principal ultrafilters live at a finite point, but the converse is not true. (It is true in $\Z$).

\section{Partition regularity of polynomial systems near zero}\label{applicazioni}

We now want to prove results about the partition regularity near zero of polynomial systems in $(S,\cdot)$ for $S$ piecewise syndetic in $((0,1),\cdot)$ or $S$ $\mathbb{Q}$-infinitesimal. Most of these results follow from quite simple observations about ultrafilters.

\begin{theorem}\label{right ideal} If $S$ is a $\mathbb{Q}$-infinitesimal semigroup, then $0^{+}(S)$ is a $\beta\mathbb{Q}^{+}$ left ideal, in the sense that for all $\U\in 0^{+}(S),\V\in\beta\mathbb{Q}^{+}$, $\V\odot\U\in 0^{+}(S)$. \end{theorem}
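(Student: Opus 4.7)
I would unpack the conclusion $\V\odot\U\in 0^{+}(S)$ directly from the definition of $0^{+}(S)$ and the right-topological product. Concretely, it suffices to check that for every $\varepsilon>0$ the set $(0,\varepsilon)\cap S$ lies in $\V\odot\U$, which, by the definition of $\odot$, amounts to showing
\[
\bigl\{s\in \R^{+} : \{t \in \R^{+} : s\cdot t\in (0,\varepsilon)\cap S\}\in\U\bigr\}\in\V.
\]
Since $\V\in\beta\mathbb{Q}^{+}$, we have $\mathbb{Q}^{+}\in\V$, so it is enough to prove that for every rational $q\in\mathbb{Q}^{+}$ the inner set
\[
T_{q,\varepsilon}:=\{t\in S : q\cdot t\in (0,\varepsilon)\cap S\}
\]
belongs to $\U$.

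\textbf{Carrying this out.} Fix $q\in\mathbb{Q}^{+}$ and $\varepsilon>0$, and set $\delta:=\min(1,\varepsilon)/q>0$. For any $t\in (0,\delta)\cap S$ we have $qt<1$, so the $\mathbb{Q}$-infinitesimality of $S$ (applied to $t\in S$ and $q\in\mathbb{Q}^{+}$) gives $qt\in S$; moreover $qt<\varepsilon$ by choice of $\delta$. Hence
\[
(0,\delta)\cap S\subseteq T_{q,\varepsilon}.
\]
Since $\U\in 0^{+}(S)$, the left-hand side is in $\U$, so $T_{q,\varepsilon}\in\U$. As this holds for every $q\in\mathbb{Q}^{+}$, the set of witnesses in the outer condition contains $\mathbb{Q}^{+}\in\V$, so $(0,\varepsilon)\cap S\in\V\odot\U$. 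Since $\varepsilon>0$ was arbitrary and $S\supseteq (0,1)\cap S\in\V\odot\U$ in particular, we conclude $\V\odot\U\in 0^{+}(S)$.

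\textbf{Main obstacle.} The only non-trivial ingredient is justifying the two inclusions $qt\in S$ and $qt<\varepsilon$ \emph{simultaneously}, which is exactly where the $\mathbb{Q}$-infinitesimal hypothesis is used: without it, multiplying an element of $S$ by a rational that could be $>1$ need not keep us inside $S$, and the argument would collapse. The rest is a mechanical translation of the definition of $\odot$, together with the fact that ultrafilters in $\beta\mathbb{Q}^{+}$ concentrate on $\mathbb{Q}^{+}$. No structural facts about $K(\beta S,\odot)$ or idempotents are needed for this particular statement.
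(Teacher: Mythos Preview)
Your argument is correct and follows essentially the same route as the paper's own proof: reduce to showing $(0,\varepsilon)\cap S\in\V\odot\U$, use $\mathbb{Q}^{+}\in\V$ to restrict attention to rational left multipliers $q$, and then observe that the inner set contains $(0,\delta)\cap S$ for a suitable $\delta$ depending on $q$ and $\varepsilon$, invoking $\mathbb{Q}$-infinitesimality to guarantee $qt\in S$. The only cosmetic difference is that the paper phrases this by contradiction (assuming the complement lies in $\V\odot\U$ and deriving an impossible element), whereas you verify the membership directly; you are also slightly more explicit in taking $\delta=\min(1,\varepsilon)/q$ to ensure $qt<1$ before applying the $\mathbb{Q}$-infinitesimal hypothesis.
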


\begin{proof} Let $\U\in 0^{+}(S)$, let $\V\in\bR^{+}$ and let $\varepsilon>0$. We have to prove that $(0,\varepsilon)\cap S\in\V\odot\U$. Suppose instead that the complement $A=\R^{+}\setminus ((0,\varepsilon)\cap S)\in\V\odot\U$. Then $\{q\in\R^{+}\mid \{s\in S\mid qs\in A\}\in\U\}\in\V$. As $\Q^{+}\in\V$, pick $q\in\Q^{+}$ such that $\{s\in S\mid qs\in A\}\in\U$. Also $\left(0,\frac{\varepsilon}{q}\right)\cap S\in\U$ as $\U\in 0^{+}(S)$, so pick $s\in\left(0,\frac{\varepsilon}{q}\right)\cap S$ such that $qs\in A$. Then $qs\in (0,\varepsilon)\cap S$, a contradiction. \end{proof}

\begin{corollary}\label{eccolo2} Let $P_{1}\left(x_{1},\dots,x_{n}\right)$, $\dots$, $P_{m}\left(x_{1},\dots,x_{n}\right)\in\mathbb{R}\left[x_{1},\dots,x_{n}\right]$. Assume that the system 
\begin{equation}\label{Sigma} \sigma\left(x_{1},\dots,x_{n}\right):=\begin{cases}  P_{1}\left(x_{1},\dots,x_{n}\right),\\ \hspace{1.2cm}\vdots \\ P_{m}\left(x_{1},\dots,x_{n}\right), \end{cases}\end{equation}
is homogeneous. Then the following facts hold:
\begin{enumerate} 
\item if $\sigma\left(x_{1},\dots,x_{n}\right)=0$ is partition regular on $\R^{+}$, then \[\overline{K\left(\bzero,\odot\right)}\subseteq\{\U\in\bzero\mid \U \ \hbox{is a }\iota_{\sigma}\hbox{-ultrafilter}\};\]
\item if $(S,\cdot)$ is a $\mathbb{Q}$-infinitesimal semigroup and $\sigma\left(x_{1},\dots,x_{n}\right)=0$ is partition regular on $\Q^{+}$, then \[\overline{K\left(\beta S,\odot\right)}\subseteq\{\U\in\beta S\mid \U \ \hbox{is a }\iota_{\sigma}\hbox{-ultrafilter}\}.\]
\end{enumerate}\end{corollary}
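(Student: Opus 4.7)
The plan for both items is the same: I would first upgrade the hypothesis to partition regularity of $\sigma=0$ on the smaller semigroup itself, namely $((0,1),\cdot)$ in case (1) and $(S,\cdot)$ in case (2), and then finish by Theorem \ref{bil id}. By Theorem \ref{aaaa}, the upgrade amounts to producing a single $\iota_{\sigma}$-ultrafilter inside $\bzero$ (resp.\ inside $\beta S$).

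The idea is to multiply the known witness on $\R^{+}$ (resp.\ $\Q^{+}$) by a near-zero ultrafilter. Let $\V$ be a $\iota_{\sigma}$-ultrafilter furnished by the hypothesis, so $\V\in\beta\R^{+}$ in case (1) and $\V\in\beta\Q^{+}$ in case (2); and pick any $\U\in 0^{+}$ (resp.\ $\U\in 0^{+}(S)$, which is nonempty because every subsemigroup of $((0,1),\cdot)$ is infinitesimal). My candidate is $\W:=\V\odot\U$. In case (2), Theorem \ref{right ideal} gives directly $\W\in 0^{+}(S)\subseteq\beta S$; in case (1), an essentially identical unwinding of the definition of $\odot$ shows that $0^{+}$ is a left ideal of $\beta\R^{+}$, so $\W\in 0^{+}\subseteq\bzero$.

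The crux is verifying that $\W$ is $\iota_{\sigma}$. Given $A\in\W$, since $\W$ lives in $\bzero$ (resp.\ in $\beta S$) I may assume $A\subseteq(0,1)$ (resp.\ $A\subseteq S$). By definition of $\odot$ the set $B:=\{s:\{t:st\in A\}\in\U\}$ belongs to $\V$, and being a $\iota_{\sigma}$-ultrafilter on $\R^{+}$ (resp.\ on $\Q^{+}$), $\V$ yields $b_{1},\dots,b_{n}\in B$ with $\sigma(b_{1},\dots,b_{n})=0$. Each $C_{i}:=\{t:b_{i}t\in A\}$ belongs to $\U$; intersecting $\bigcap_{i}C_{i}$ with $(0,\varepsilon)$ for $\varepsilon=\min_{i}1/b_{i}$ — still in $\U$ because $\U\in 0^{+}$ (resp.\ $\U\in 0^{+}(S)$) — I can extract a single $t$ satisfying $t<1/b_{i}$ and $b_{i}t\in A$ for every $i$. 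In case (1) the numbers $a_{i}:=b_{i}t$ then lie automatically in $A\cap(0,1)$; in case (2), combining $b_{i}\in\Q^{+}$, $t\in S$ and $b_{i}t<1$ with the $\Q$-infinitesimality of $S$ places each $a_{i}$ in $A\cap S$. Homogeneity of $\sigma$ finally gives $\sigma(a_{1},\dots,a_{n})=\sigma(tb_{1},\dots,tb_{n})=0$, as required.

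Once $\sigma=0$ is known to be partition regular on $(0,1)$ (resp.\ on $S$), Theorem \ref{bil id} identifies the set of $\iota_{\sigma}$-ultrafilters as a \emph{closed} bilateral ideal of $(\bzero,\odot)$ (resp.\ of $(\beta S,\odot)$); this ideal thus contains the minimal bilateral ideal and, being closed, also its closure. The main obstacle is the third paragraph: coordinating the scalar $t$ so that it simultaneously hits every $C_{i}$, stays below $1/b_{i}$, and lands in the correct ambient set — and in case (2) this is precisely what the hypothesis of $\Q$-infinitesimality was designed to enable.
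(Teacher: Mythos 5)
Your proof is correct, but it takes a genuinely different route from the paper's. The paper applies Theorem \ref{bil id} in the \emph{ambient} semigroup ($\beta\R^{+}$, resp.\ $\beta\Q^{+}$), so that $I_{\sigma}\supseteq\overline{K(\beta\R^{+},\odot)}$ comes for free, and then reduces everything to the single claim $K(\bzero,\odot)=\bzero\cap K(\beta\R^{+},\odot)$, which it gets from the Hindman--Strauss fact that $K(\beta T,\odot)=\beta T\cap K(\beta S,\odot)$ whenever $\beta T$ meets $K(\beta S,\odot)$; the only computation is that $\beta\R^{+}\odot\,\U\subseteq\beta(0,1)$ for $\U\in 0^{+}$, which shows the required nonemptiness. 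You instead first \emph{upgrade} the hypothesis to partition regularity on $(0,1)$ (resp.\ on $S$) by exhibiting an explicit witness $\V\odot\U$ in $0^{+}$ (resp.\ $0^{+}(S)$), and only then invoke Theorem \ref{bil id} inside the smaller semigroup. Both proofs hinge on multiplying on the right by an element of $0^{+}$, but they use it for different purposes. Your version is more self-contained (no appeal to the comparison of minimal ideals of a semigroup and a subsemigroup) and yields the extra information that $\V\odot\U$ is a $\iota_{\sigma}$-ultrafilter near zero for \emph{any} witness $\V$; the price is a second, hands-on use of homogeneity in the tensor-product computation, whereas the paper uses homogeneity only once, hidden inside Theorem \ref{bil id}. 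Two small remarks: in case (1) you could shortcut your third paragraph by noting that Theorem \ref{bil id} applied to $\R^{+}$ already makes $I_{\sigma}$ a bilateral ideal of $\beta\R^{+}$, so $\V\odot\U\in I_{\sigma}$ automatically (though this shortcut does not work verbatim in case (2), since $\V\odot\U\notin\beta\Q^{+}$, so your uniform direct verification is the right call); and your appeal to $\Q$-infinitesimality at the very end is redundant --- once $A\subseteq S$ and $b_{i}t\in A$, membership in $S$ is automatic; $\Q$-infinitesimality is really consumed earlier, in Theorem \ref{right ideal}, to guarantee $\V\odot\U\in 0^{+}(S)$.
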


\begin{proof} By Theorem \ref{bil id}, the set $I_{\sigma}=\{\U\in\beta\R^{+}\mid \U$ is a $\iota_{\sigma}$-ultrafilter\} is a closed bilateral ideal in $\left(\beta\R^{+},\odot\right)$, hence $\overline{K\left(\beta\R^{+},\odot\right)}\subseteq I_{\sigma}$. It sufficies to show that $K\left(\beta(0,1),\odot\right)=\beta(0,1)\cap K\left(\beta\R^{+},\odot\right)$, as if this holds then
$$\overline{K\left(\beta(0,1),\odot\right)}\subseteq \beta(0,1)\cap I_{\sigma}=\{\U\in\beta (0,1)\mid \U \ \hbox{is a }\iota_{\sigma}\hbox{-ultrafilter}\}.$$
By \cite[Theorem 1.65(1)]{rif12}, to prove that $K\left(\beta(0,1),\odot\right)=\beta(0,1)\cap K\left(\beta\R^{+},\odot\right)$ it sufficies to show that $\beta(0,1)\cap K\left(\beta\R^{+},\odot\right)\neq\emptyset$. Let $\U\in 0^{+}$. We claim that $\beta\R^{+}\odot \U\subseteq\beta(0,1)$. In fact, let $\V\in\beta\R^{+}$. If $r\in\R^{+}$, $\left(0,\frac{1}{r}\right)\in\U$, and $\left(0,\frac{1}{r}\right)\subseteq\{s\in\R^{+}\mid rs\in (0,1)\}$, therefore $(0,1)\in\V\odot\U$. Hence $\beta(0,1)$ contains a left ideal of $\left(\beta\R^{+},\odot\right)$, so $\beta(0,1)\cap K\left(\beta\R^{+},\odot\right)\neq \emptyset$ as required.

(2) By exchanging $\R^{+}$ with $\Q^{+}$, $(0,1)$ with $S$ and, consequently, $0^{+}$ with $0^{+}(S)$, the same proof as above works. We prove explicitly the only point where $S$ being $\Q$-infinitesimal is used, namely that if $\U\in 0^{+}(S)$ then $\beta\Q^{+}\odot\U\subseteq \beta S$. Let $\V\in\beta\Q^{+}$. If $q\in\Q^{+}$, $\left(0,\frac{1}{q}\right)\cap S\in\U$ as $\U\in 0^{+}(S)$, and $\left(0,\frac{1}{q}\right)\cap S=\{s\in S\mid qs\in S\}$ as $S$ is $\Q$-infinitesimal. Therefore $\V\odot\U\in\beta S$ and we can proceed as in (1). \end{proof}

\begin{corollary}\label{eccolo3} Let $P_{1}\left(x_{1},\dots,x_{n}\right)$, $\dots$, $P_{m}\left(x_{1},\dots,x_{n}\right)\in\mathbb{R}\left[x_{1},\dots,x_{n}\right]$. Assume that the system 
\begin{equation}\label{Sigma} \sigma\left(x_{1},\dots,x_{n}\right):=\begin{cases}  P_{1}\left(x_{1},\dots,x_{n}\right),\\ \hspace{1.2cm}\vdots \\ P_{m}\left(x_{1},\dots,x_{n}\right) \end{cases}\end{equation}
is homogeneous. Then the following facts hold:
\begin{enumerate} 
\item if $\sigma\left(x_{1},\dots,x_{n}\right)=0$ is partition regular on $\R^{+}$, then every set $A$ that is piecewise syndetic in $((0,1),\cdot)$ contains a solution to $\sigma\left(x_{1},\dots,x_{n}\right)=0$;
\item if $(S,\cdot)$ is a $\mathbb{Q}$-infinitesimal semigroup and $\sigma\left(x_{1},\dots,x_{n}\right)=0$ is partition regular on $\Q^{+}$, then every set $A$ that is piecewise syndetic in $(S,\cdot)$ contains a solution to $\sigma\left(x_{1},\dots,x_{n}\right)=0$.
\end{enumerate}\end{corollary}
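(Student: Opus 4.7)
The plan is to deduce this corollary immediately from Corollary \ref{eccolo2} together with the ultrafilter characterization of piecewise syndetic sets recalled in the first theorem of Section \ref{ult} (namely the equivalence $(i)\Leftrightarrow(ii)$ there, which cites \cite[Theorem 4.40]{rif12}). The point is that any piecewise syndetic set is a member of some ultrafilter in the smallest bilateral ideal, and Corollary \ref{eccolo2} already tells us that every ultrafilter in the closure of that ideal is a $\iota_\sigma$-ultrafilter.

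For part (1), suppose $A\subseteq (0,1)$ is piecewise syndetic in $((0,1),\cdot)$. Then there exists $\U\in K(\bzero,\odot)$ with $A\in\U$. Since trivially $K(\bzero,\odot)\subseteq\overline{K(\bzero,\odot)}$, Corollary \ref{eccolo2}(1) applies and gives that $\U$ is a $\iota_\sigma$-ultrafilter. By the definition of $\iota_\sigma$-ultrafilter given right after Theorem \ref{aaaa}, every member of $\U$ contains an $n$-tuple solving $\sigma(x_1,\dots,x_n)=0$; in particular $A$ does.

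Part (2) is obtained by the exact same argument after replacing $(0,1)$ with $S$, $\bzero$ with $\beta S$, and invoking Corollary \ref{eccolo2}(2) in place of Corollary \ref{eccolo2}(1). The piecewise syndetic characterization via the minimal ideal holds for any discrete semigroup, so no additional hypothesis on $S$ is used beyond the $\Q$-infinitesimality already built into Corollary \ref{eccolo2}(2).

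There is essentially no obstacle here: all the substance has been packaged into Corollary \ref{eccolo2}, and this corollary is just the combinatorial shadow of that ultrafilter statement obtained via the trivial inclusion $K\subseteq\overline{K}$ and the definition of $\iota_\sigma$-ultrafilters.
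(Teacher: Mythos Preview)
Your proof is correct and matches the paper's own argument essentially verbatim: the paper also deduces the corollary immediately from Corollary \ref{eccolo2} together with the characterization of piecewise syndeticity via membership in an ultrafilter of the smallest ideal (citing \cite[Theorem 4.40]{rif12}). The only cosmetic difference is that the paper cites \cite[Theorem 4.40]{rif12} directly for arbitrary subsets $A$ of a semigroup, whereas you route through the equivalence $(i)\Leftrightarrow(ii)$ stated earlier in Section \ref{ult}, which is phrased for a subsemigroup $S$; but since that equivalence is itself just an instance of \cite[Theorem 4.40]{rif12}, there is no real distinction.
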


\begin{proof} These facts follows directly from Corollary \ref{eccolo2} and the fact that for any semigroup $S$ and any $A\subseteq S$ one has that $\overline{A}\cap K(\beta S,\odot)\neq \emptyset$ if and only if $A$ is piecewise syndetic in $(S,\cdot)$ (see \cite[Theorem 4.40]{rif12}).\end{proof}

\begin{example} Theorem $9$ in \cite{indiani}, namely the fact that piecewise syndetic sets in $(S\cap (0,1),\cdot)$ for $S$ a $HL$-semigroup, are both $AP$-rich near zero and $IP_{0}$-sets near zero, is an immediate consequence of Corollary \ref{eccolo3}, as all systems that describe the properties of being $AP$-rich and being $IP_{0}$ are homogeneous and partition regular on $\mathbb{Q}^{+}$ by Theorem \ref{RadoS}. Notice that Corollary \ref{eccolo3} tells us more: if $A$ is any matrix with coefficients in $\mathbb{Q}$ with the columns property, then the system $A\vec{x}$ is automatically partition regular near zero and solvable in any piecewise syndetic subset of a $\mathbb{Q}$-infinitesimal semigroup $(S,\cdot)$.\end{example}

\begin{example}\label{razionali} As the partition regularity notions on $\R^{+}$ and $\Q^{+}$ are not the same, one has to be careful when applying Corollary \ref{eccolo3}. For example, let us consider Fermat's polynomial $P(x,y,z):=x^{3}+y^{3}-z^{3}$. By Theorem \ref{Rado2}, $P(x,y,z)=0$ is partition regular on $\R^{+}$ hence, as it is homogeneous, it is partition regular near zero and solvable in any piecewise syndetic subset of $((0,1),\cdot)$. However, it is not solvable in all piecewise syndetic subsets of $\mathbb{Q}$-infinitesimal semigroups: for example, it is not solvable in $\mathbb{Q}\cap (0,1)$. This gives another proof of the fact that $\mathbb{Q}\cap (0,1)$ is not piecewise syndetic in $((0,1),\cdot)$.\end{example}

The polynomial $a+b-cd$ is not homogeneous, so its partition regularity near zero can not be directly deduced from Theorem \ref{right ideal} and its corollaries. However, we can prove that it is partition regular near zero (and construct ultrafilters that witness its partition regularity) following the 
methods first introduced in \cite{mio}, using nonstandard analysis, and then developed also in \cite{advances} by purely standard methods. We will use the following known simple fact (which is a trivial consequence of idempotency, see for example \cite[Theorem 5.12]{rif12}):

\begin{proposition}\label{IPO} Let $(S,\cdot)$ be a subsemigroup of $\left(\R^{+},\cdot\right)$ and let $\U$ be an idempotent in $(\beta S,\odot)$. Let $n\in \N$ and let $P\left(x,y_{1},\dots,y_{n}\right):= x-\prod_{j=1}^{n} y_{j}$. Then $\U$ is a $\iota_{P}$-ultrafilter. \end{proposition}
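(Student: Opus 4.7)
By Theorem~\ref{aaaa}, showing that $\U$ is a $\iota_{P}$-ultrafilter amounts to proving that every $A\in\U$ contains elements $b_{1},\dots,b_{n}\in A$ whose product $\prod_{j=1}^{n}b_{j}$ again lies in $A$: then $x:=\prod_{j=1}^{n}b_{j}$ together with $y_{j}:=b_{j}$ is a solution of $P(x,y_{1},\dots,y_{n})=0$ entirely inside $A$. So the whole question reduces to finding, inside any $A\in\U$, a length-$n$ product that lands back in $A$.

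The plan is to use the classical ``starred set'' trick for idempotents. For $A\in\U$ set
\[A^{\star}:=\{s\in A\mid s^{-1}A\in\U\},\qquad\text{where}\ s^{-1}A:=\{t\in S\mid s\cdot t\in A\}.\]
That $A^{\star}\in\U$ is a direct unfolding of $\U\odot\U=\U$ intersected with $A$. The subsidiary claim I would establish is: \emph{for every $s\in A^{\star}$ one has $s^{-1}A^{\star}\in\U$}. The quickest route is the identity $s^{-1}A^{\star}=(s^{-1}A)^{\star}$: indeed, $t\in s^{-1}A^{\star}$ unfolds to ``$st\in A$ and $(st)^{-1}A\in\U$'', and by associativity of $\cdot$ in $S$ this is equivalent to ``$t\in s^{-1}A$ and $t^{-1}(s^{-1}A)\in\U$'', i.e.\ $t\in(s^{-1}A)^{\star}$. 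Since $s\in A^{\star}$ gives $s^{-1}A\in\U$, applying the first observation to $s^{-1}A$ in place of $A$ yields $(s^{-1}A)^{\star}\in\U$, as needed.

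With the subsidiary claim in hand, I would produce the required $b_{1},\dots,b_{n}$ by a short induction, maintaining the invariant that every prefix product $b_{1}\cdots b_{k}$ lies in $A^{\star}$. Start by picking any $b_{1}\in A^{\star}$. At stage $k+1$, the inductive hypothesis gives $b_{1}\cdots b_{k}\in A^{\star}$, so both $A^{\star}$ and $(b_{1}\cdots b_{k})^{-1}A^{\star}$ belong to $\U$; their intersection is therefore nonempty, and any $b_{k+1}$ in it satisfies $b_{k+1}\in A^{\star}\subseteq A$ together with $b_{1}\cdots b_{k+1}\in A^{\star}$. After $n$ steps one has $b_{1}\cdots b_{n}\in A^{\star}\subseteq A$, which is exactly what was reduced to. No substantive obstacle is anticipated: the whole argument is just a finite iteration of the one-step identity $s^{-1}A^{\star}=(s^{-1}A)^{\star}$, which itself is a two-line rewriting using associativity of the semigroup operation. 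The only case worth mentioning separately is $n=1$, where $P(x,y_{1})=x-y_{1}$ and one simply takes $x=y_{1}$ to be any element of $A$.
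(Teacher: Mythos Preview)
Your argument is correct: the ``starred set'' construction $A^{\star}$ and the induction on prefix products is exactly the standard proof underlying \cite[Theorem~5.12]{rif12}, which is what the paper invokes in lieu of a written proof. So you have simply unpacked the reference the paper cites, and there is nothing to add.
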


\begin{proposition} The polynomial $P(a,b,c,d):=a+b-cd$ is partition regular near zero; more precisely, any idempotent ultrafilter $\U$ in either $\overline{K(\bzero,\odot)}$ or $\overline{K(\beta S,\odot)}$ for $S$ $\mathbb{Q}$-infinitesimal semigroup is a $\iota_{P}$-ultrafilter. \end{proposition}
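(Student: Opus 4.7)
The plan is to realize $P$ as a ``merge'' of two simpler homogeneous systems that $\mathcal{U}$ already witnesses, and then invoke Lemma \ref{lemmasystem}.

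First, I would observe that the linear equation $R(a,b,e) := a+b-e = 0$ is homogeneous and partition regular on $\mathbb{Q}^{+}$ (and on $\mathbb{R}^{+}$) by Theorem \ref{Rado}, since the subset $\{1, -1\}$ of its coefficients sums to zero. Because $\mathcal{U}$ lies in the closure of the minimal ideal $K(\beta(0,1)_d, \odot)$ (respectively $K(\beta S, \odot)$), Corollary \ref{eccolo2} then guarantees that $\mathcal{U}$ is a $\iota_R$-ultrafilter.

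Second, since $\mathcal{U}$ is idempotent, Proposition \ref{IPO} applied with $n=2$ and $P(x,y_1,y_2) = x - y_1 y_2$ shows that $\mathcal{U}$ is a $\iota_Q$-ultrafilter for $Q(x,c,d) := x - cd$.

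Third, I would combine these two witnessings via Lemma \ref{lemmasystem}. The bookkeeping point is to order the variables as $(e, a, b)$ for the first system and $(x, c, d)$ for the second, so that the ``$x_1 - y_1$'' equation produced by the lemma becomes $e - x$. The lemma then yields that $\mathcal{U}$ witnesses the joint system
\[
\begin{cases} a + b - e = 0,\\ x - cd = 0,\\ e - x = 0. \end{cases}
\]
Unfolding this, for every $A \in \mathcal{U}$ there exist $a,b,c,d,e,x \in A$ with $a+b = e = x = cd$; in particular $a,b,c,d \in A$ satisfy $P(a,b,c,d) = 0$. Hence $\mathcal{U}$ is a $\iota_P$-ultrafilter.

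Finally, to convert this into partition regularity \emph{near zero}, I would use the inclusion $K(\beta(0,1)_d, \odot) \subseteq 0^{+}$ (resp.\ $K(\beta S, \odot) \subseteq 0^{+}(S)$) from the earlier theorem, together with the fact that $0^{+}$ (resp.\ $0^{+}(S)$) is closed, to deduce $\mathcal{U} \in 0^{+}$ (resp.\ $\mathcal{U} \in 0^{+}(S)$); then Proposition \ref{ultra part zero} delivers the conclusion. The only obstacle is the variable-matching bookkeeping in Lemma \ref{lemmasystem}, since the lemma only identifies the pair $(x_1, y_1)$; no deeper obstruction appears, as the non-homogeneity of $P$ is exactly what the combination $R + Q$ is designed to handle.
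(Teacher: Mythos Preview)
Your proposal is correct and follows essentially the same route as the paper: split $P$ into the linear homogeneous piece $a+b-e$ (handled via Corollary~\ref{eccolo2}) and the multiplicative piece $x-cd$ (handled via Proposition~\ref{IPO}), then glue them with Lemma~\ref{lemmasystem} along $e=x$. Your extra remarks on variable ordering and on deducing $\mathcal{U}\in 0^{+}$ (resp.\ $0^{+}(S)$) are just making explicit what the paper leaves implicit.
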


\begin{proof} Let $\U=\U\odot\U$ be an idempotent ultrafilter in either $\overline{K\left(\bzero,\odot\right)}$ or $\overline{K(\beta S,\odot)}$. From Corollary \ref{eccolo2} we know that $\U$ witnesses the partition regularity of $a+b=x$, as this equation is partition regular on $\Q^{+}$. As $\U$ is idempotent, by Proposition \ref{IPO} $\U$ witnesses the partition regularity of the equation $y=cd$. So by Lemma \ref{lemmasystem} we have that $\U$ witnesses the partition regularity of the system  
\[\begin{cases}  a+b=x; \\ y=cd; \\ x=y,\end{cases}\]
hence $\U$ witnesses the partition regularity of $a+b-cd=0$.\end{proof}

Let us notice that the above proof actually shows more, as it tells us that the color of $a+b, cd$ is the same as that of $a,b,c,d$. This proof can be generalized to show the analogue of Theorem \ref{lev} near zero.

\begin{theorem}\label{lev2} Let $n\geq 2$ be a natural number, let $R\left(x_{1},\dots,x_{n}\right)=\sum\limits_{i=1}^{n} c_{i}x_{i}\in\R\left[x_{1},\dots,x_{n}\right]$ be partition regular on $\R^{+}$, let $m$ be a positive natural number, and let $F_{1},\dots,F_{n}\subseteq\{1,\dots,m\}$. Let\footnote{The sets $Q_{F_{i}}$ are defined in Definition \ref{Qi}.}
\begin{equation}\label{pollo} P\left(x_{1},\dots,x_{n},y_{1},\dots,y_{m}\right)=\sum\limits_{i=1}^{n} c_{i}x_{i}Q_{F_{i}}\left(y_{1},\dots,y_{m}\right). \end{equation}
Then $P\left(x_{1},\dots,x_{n},y_{1},\dots,y_{m}\right)$ is partition regular near zero. More precisely:
\begin{enumerate}
\item every idempotent ultrafilter in $\overline{K(\bzero,\odot)}$ is a $\iota_{P}$-ultrafilter;
\item if $R\left(x_{1},\dots,x_{n}\right)$ is partition regular on $\Q^{+}$ and $S$ is a $\mathbb{Q}$-infinitesimal semigroup then every idempotent ultrafilter in $\overline{K(\beta S,\odot)}$ is a $\iota_{P}$-ultrafilter.
\end{enumerate}\end{theorem}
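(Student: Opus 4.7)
The plan is to extend the argument just given for $a+b-cd$ by reducing $P$ to an auxiliary system whose components are individually $\U$-witnessed. Fix an idempotent ultrafilter $\U$ in $\overline{K(\bzero,\odot)}$ in case (1), or in $\overline{K(\beta S,\odot)}$ in case (2). Since $0^+$ (respectively $0^+(S)$) is closed and contains $K(\bzero,\odot)$ (respectively $K(\beta S,\odot)$), $\U$ lies in $0^+$ (respectively $0^+(S)$); hence by Proposition \ref{ultra part zero} it suffices to prove that $\U$ is a $\iota_P$-ultrafilter.

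Introduce auxiliary variables $z_1,\dots,z_n$ and consider the system
\[
\widetilde{\sigma}:\quad \begin{cases} R(z_1,\dots,z_n)=0, \\ z_i - x_i\,Q_{F_i}(y_1,\dots,y_m) = 0 & (i=1,\dots,n). \end{cases}
\]
Any solution of $\widetilde\sigma=0$ whose entries all lie in some $A\in\U$ projects onto a solution of $P(\vec x,\vec y)=0$ inside $A$, so it is enough to show that $\U$ witnesses $\widetilde\sigma$. Each building block is $\U$-witnessed: the homogeneous linear equation $R(\vec z)=0$ is witnessed by Corollary \ref{eccolo2} (using partition regularity of $R$ on $\R^+$ in case (1), and on $\Q^+$ together with the $\Q$-infinitesimality of $S$ in case (2)); for each $i$ with $F_i\neq\emptyset$ the product equation $z_i-x_i\prod_{j\in F_i}y_j=0$ is an instance of Proposition \ref{IPO}, applied to the idempotent $\U$ with $z_i$ in the role of ``$x$'' and $x_i$ together with $(y_j)_{j\in F_i}$ in the role of the ``$y$''-variables; finally, when $F_i=\emptyset$ the equation $z_i-x_i=0$ is witnessed trivially by every ultrafilter.

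I would then glue these witnessed subsystems into $\widetilde\sigma$ by iterated application of Lemma \ref{lemmasystem}. Two kinds of identifications are needed: between $z_i$ as it appears in $R$ and $z_i$ as it appears in the $i$-th product equation, and between the various occurrences of each $y_j$ across the product equations whose index set $F_i$ contains $j$. The main technical hurdle is the bookkeeping for these several identifications, since Lemma \ref{lemmasystem} as stated introduces only one identification per application; however, its proof adapts to any finite family of cross-system identifications, because for each identified variable the set of admissible values lies in $\U$ and a finite intersection of members of $\U$ is still a member of $\U$, yielding common values simultaneously for all identifications. Once $\U$ is shown to witness $\widetilde\sigma$, projecting a monochromatic solution onto the $(\vec x,\vec y)$-coordinates yields the desired monochromatic solution of $P=0$, completing both (1) and (2).
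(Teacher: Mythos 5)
Your reduction to the auxiliary system $\widetilde\sigma$ and your identification of the building blocks (Corollary \ref{eccolo2} for $R$, Proposition \ref{IPO} for the product equations) are sound, but the gluing step contains a genuine gap, and it sits exactly where the whole difficulty of the theorem lies. Lemma \ref{lemmasystem} identifies a \emph{single} variable between two systems, and its proof works because the set of admissible values of that one variable (as first coordinate of a solution lying inside $A$) belongs to $\U$ for each system, so one can intersect and pick a common value. This does not extend to identifying a \emph{tuple} of variables shared between two subsystems: if, say, $F_1=F_2=\{1,2\}$, the equations $z_1=x_1y_1y_2$ and $z_2=x_2y_1y_2$ share the pair $(y_1,y_2)$, and knowing that the set of admissible values of $y_1$ and the set of admissible values of $y_2$ each lie in $\U$ does not produce a single pair admissible for both equations simultaneously: $\U$ is an ultrafilter on $S$, not on $S\times S$, and within one equation the admissible values of $y_1$ and of $y_2$ are not independent of each other. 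So the sentence claiming that the proof of Lemma \ref{lemmasystem} ``adapts to any finite family of cross-system identifications \dots yielding common values simultaneously'' is precisely the unproved step, and the justification offered for it (finite intersections of members of $\U$) is not valid. Your argument as written goes through only when the $F_i$ are pairwise disjoint.

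The paper's proof does not glue pre-witnessed subsystems; it runs a direct induction exploiting idempotency. Given $A\in\U$ it builds a chain $A=B_0\supseteq B_1\supseteq\dots\supseteq B_m$ of members of $\U$, takes a solution $a_1,\dots,a_n$ of $R=0$ inside the deepest set $B_m$, and then constructs $b_1,\dots,b_m$ one at a time so that $a_i\cdot\prod_{j\in G}b_j\in B_{m-\max G}$ for \emph{every} $i$ and \emph{every} $G\subseteq\{1,\dots,m\}$; this uniformity over all subsets $G$ is what replaces your many simultaneous identifications. A second ingredient you would also need is the homogeneity trick: the solution of $P=0$ is not $(a_1,\dots,a_n,b_1,\dots,b_m)$ but rather $d_i:=a_i\prod_{j\in F_i^c}b_j$ together with the $b_j$, so that $d_i\prod_{j\in F_i}b_j=a_i\prod_{j=1}^m b_j$ and $\sum_i c_id_i\prod_{j\in F_i}b_j=\bigl(\prod_{j=1}^m b_j\bigr)\sum_i c_ia_i=0$. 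In your formulation this means the values $z_i$ cannot be an arbitrary solution of $R=0$ found inside a suitable member of $\U$: they must all be the corresponding $a_i$ rescaled by the common factor $\prod_{j=1}^m b_j$, which again is not something an intersection argument can arrange.
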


\begin{proof} This result can be proven following the same ideas of the proof of Theorem 3.3 in \cite{mio}, if one wants to use nonstandard methods, or of Theorem 2.10 in \cite{advances}, if one wants to use purely standard arguments based on ultrafilters. We adapt here the proof of Theorem 2.10 in \cite{advances} (which talked about the partition regularity on $\N$ and was more complicated as we handled also the injectivity properties of the sets of solutions of equation \ref{pollo}) to our present case. 

Let $\U$ be an idempotent ultrafilter in $\overline{K(\bzero,\odot)}$ or $\overline{K(\beta S,\odot)}$. In both cases, our hypothesis on $R\left(x_{1},\dots,x_{n}\right)$ ensures that $\U$ is a $\iota_{R}$-ultrafilter. Given $A\in\U$, set $B_0=A$ and
inductively define 
$$B_{k}=\{x\in B_{k-1}\mid \{y\in B_{k-1}\mid 
x\cdot y\in B_{k-1}\}\in \U\}.$$
Trivially we have that $B_{m}\subseteq B_{m-1}\subseteq\dots
\subseteq B_{1}\subseteq B_0=A$ and, as $\U$ is multiplicatively idempotent and $B_0\in\U$, 
it is immediate that all sets $B_{1},\dots, B_{m}\in\U$.
Since $\U$ is a $\iota_{R}$-ultrafilter, there exist $a_{1},\dots,a_{n}\in B_{m}$ such that
$R\left(a_{1},\dots,a_{n}\right)=0$. We now claim that there exist $b_1,\ldots,b_m\in A$
such that: 
\begin{enumerate}
\item 
$b_k\in B_{m-k}$ for every $k=1,\ldots,m$,
\item 
$a_{i}\cdot\prod_{j\in G}b_{j}\in B_{m-\max G}$
for every $i$ and for every set $G\subseteq\{1,\dots,m\}$.\footnote
{~We agree that $\prod_{j\in G} b_{j} =1$ and $\max G=0$ when $G=\emptyset$.}
\end{enumerate}

We define $b_{k}$ inductively for $k\leq m$.

Let $k=1$.
As $a_i\in B_m$ for every $i=1,\dots,n$, we have that
$$C_{i}=\{y\in B_{m-1}\mid a_{i}\cdot y\in B_{m-1}\}\in\U.$$
Pick $b_1\in C_{1}\cap\dots\cap C_{n}\in\U$. 
Trivially, $b_1\in B_{m-1}$ and, for every $i\leq n$, we have
$a_i\cdot\prod_{j\in\{1\}}b_j=a_{i}\cdot b_{1}\in B_{m-1}=B_{m-\max\{1\}}$,
and $a_i\cdot\prod_{j\in\emptyset}b_j=a_i\in B_m=B_{m-\max\emptyset}$.

Now let us prove the inductive step. Assume that numbers $b_{1},\dots,b_{k}$ where $k\le m-1$ fulfilling the properties of the claim have been defined. To define $b_{k+1}$, we observe that for every set $G\subseteq\{1,\dots,k\}$ and
for every $i$, by the inductive hypothesis 
$a_{i}\cdot\prod_{j\in G}b_j\in B_{m-\max G}$, and hence
$$C_{G,i}=\left\{y\in B_{m-\max G-1}\,\Big|\, 
a_{i}\cdot \prod_{j\in G}b_j\cdot y \in B_{m-\max G-1}\right\}\in\U.$$
Let $b_{k+1}\in\,\bigcap_{i=1}^{n}\left(\bigcap_{G\subseteq\{1,\dots,k\}}\!\!
C_{G,i}\right)\in\U.$

Notice that every $C_{G,i}\subseteq B_{m-\max G-1}\subseteq B_{m-(k+1)}$,
and so $b_{k+1}\in B_{m-(k+1)}$. To prove that $b_{k+1}$ has the desired multiplicative properties, let $G\subseteq\{1,\dots,k+1\}$.
If $G\subseteq\{1,\ldots,k\}$ then, by the inductive hypothesis,
$a_i\cdot\prod_{j\in G}b_j\in B_{m-\max G}$ for every $i$.
If $k+1\in G$, let $G'=G\setminus\{k+1\}$.
For every $i$,
by the inductive hypothesis on $G'$, we know that
$$a_i\cdot\prod_{j\in G'}b_j\in B_{m-\max G'}\subseteq B_{m-k},$$
so, as $b_{k+1}\in C_{G',i}$, we deduce that
$$a_i\cdot\prod_{j\in G}b_j=a_i\cdot\prod_{j\in G'}b_j\cdot 
b_{k+1}\in B_{m-\max G'-1}\subseteq B_{m-k-1}\subseteq B_{m-\max G}.$$
This proves the claim.

Now, for $i=1,\dots,n$ let
$$d_{i}:=a_{i}\cdot \prod_{j\in F_{i}^{c}} b_{j}.$$

Then $d_{1},\dots,d_{n},b_{1},\dots,b_{m}$ are elements of $A$ with $P\left(d_{1},\dots,d_{n},b_{1},\dots,b_{m}\right)=0$.
Indeed, by the claim, we have that $d_i\in B_{m-\max F_i^c}\subseteq A$ and
$b_j\in B_{m-j}\subseteq A$. Moreover,
$$\sum_{i=1}^{n} c_{i}\,d_i\!\left(\prod_{j\in F_{i}}b_{j}\right)=
\sum_{i=1}^{n} c_{i}\,a_i\left(\prod_{j\in F_{i}^c}b_{j}\right)\!\left(\prod_{j\in F_{i}}b_{j}\right)=
\left(\prod_{j=1}^{m}b_{j}\right)\!\left(\sum_{i=1}^{n} c_{i}a_{i}\right)=0.$$
\end{proof}

By putting together Theorem \ref{lev2}, Corollary \ref{eccolo2} and Lemma \ref{lemmasystem}, we obtain our final result about polynomial systems that are partition regular near zero.

\begin{theorem}\label{main} Let $\U$ be an idempotent ultrafilter in $\overline{K(\bzero,\odot)}$ (resp., let $\U$ be an idempotent ultrafilter in $\overline{K(\beta S,\odot)}$ for $S$ a $\mathbb{Q}$-infinitesimal semigroup). Let $\mathcal{C}_{\U}$ be the set of polynomial systems whose partition regularity is witnessed by $\U$. Then $\mathcal{C}_{\U}$ includes:
\begin{enumerate} 
\item all partition regular homogeneous systems on $\R^{+}$ (resp. all partition regular homogeneous systems on $\Q^{+}$);
\item all equations of the form 
\[P\left(x_{1},\dots,x_{n},y_{1},\dots,y_{m}\right)=\sum\limits_{i=1}^{n} a_{i}x_{i}Q_{F_{i}}\left(y_{1},\dots,y_{m}\right)\] where $\sum\limits_{i=1}^{n} a_{i}x_{i}\in\R\left[x_{1},\dots,x_{n}\right]$ is partition regular on $\R^{+}$ (resp. on $\Q^{+}$) and $F_{1},\dots,F_{n}\subseteq\{1,\dots,m\}$.
\end{enumerate}
Moreover, if
\[\sigma_{1}\left(x_{1},\dots,x_{n}\right)=\begin{cases}  P_{1}\left(x_{1},\dots,x_{n}\right),\\ \hspace{1.2cm}\vdots \\ P_{m}\left(x_{1},\dots,x_{n}\right) \end{cases}\]
and
\[\sigma_{2}\left(y_{1},\dots,y_{l}\right)=\begin{cases}  Q_{1}\left(y_{1},\dots,y_{l}\right),\\ \hspace{1.2cm}\vdots \\ Q_{t}\left(y_{1},\dots,y_{l}\right) \end{cases}\]
belong to $\mathcal{C}_{\U}$, then also
\[\sigma_{3}\left(x_{1},\dots,x_{n},y_{1},\dots,y_{l}\right)=\begin{cases}  P_{1}\left(x_{1},\dots,x_{n}\right),\\ \hspace{1.2cm}\vdots \\ P_{m}\left(x_{1},\dots,x_{n}\right), \\Q_{1}\left(y_{1},\dots,y_{l}\right),\\ \hspace{1.2cm}\vdots \\ Q_{t}\left(y_{1},\dots,y_{l}\right),\\ x_{1}-y_{1}\end{cases}\]
belongs to $\mathcal{C}_{\U}$.

\end{theorem}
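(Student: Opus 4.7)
The plan is to recognize that this theorem is essentially a consolidation of the three main technical results established earlier in Section~\ref{applicazioni}, with no new ingredients needed. First, I would obtain item~(1) by direct appeal to Corollary~\ref{eccolo2}, which ensures that $\overline{K(\bzero,\odot)}$ (resp.\ $\overline{K(\beta S,\odot)}$) is contained in the set of $\iota_{\sigma}$-ultrafilters for any homogeneous system $\sigma=0$ that is partition regular on $\R^{+}$ (resp.\ on $\Q^{+}$). Here idempotency of $\U$ is not even used; membership in the closure of the minimal ideal already suffices.

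Next, I would obtain item~(2) by invoking Theorem~\ref{lev2} verbatim: since $\U$ is idempotent and lies in the appropriate closure of the minimal ideal, Theorem~\ref{lev2} gives directly that $\U$ is a $\iota_{P}$-ultrafilter for every polynomial $P$ of the form~(\ref{pollo}). The $\R^{+}$ versus $\Q^{+}$ dichotomy in the hypothesis on the linear part $\sum_{i=1}^{n} a_{i}x_{i}$ matches the corresponding branches of Theorem~\ref{lev2}, so both cases of item~(2) are covered uniformly.

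Finally, for the closure property, I would apply Lemma~\ref{lemmasystem}: given that $\sigma_{1},\sigma_{2}\in\mathcal{C}_{\U}$, so that $\U$ witnesses the partition regularity of both systems, the lemma immediately produces $\U$ as a witness for the merged system $\sigma_{3}$, namely the concatenation of $\sigma_{1}$ and $\sigma_{2}$ together with the extra equation $x_{1}-y_{1}=0$. Thus $\sigma_{3}\in\mathcal{C}_{\U}$.

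I do not anticipate any real obstacle: the statement is a summary of pieces already in hand, and the proof is essentially a citation. The only care needed is to keep the base-semigroup hypotheses ($\R^{+}$ vs.\ $\Q^{+}$, respectively $\bzero$ vs.\ $\beta S$ with $S$ $\Q$-infinitesimal) aligned across the three ingredients, which happens automatically because Corollary~\ref{eccolo2} and Theorem~\ref{lev2} have been proven in exactly the same two parallel versions, and Lemma~\ref{lemmasystem} is insensitive to which of the two base semigroups we work in.
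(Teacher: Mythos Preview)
Your proposal is correct and matches the paper's approach exactly: the paper does not give a separate proof of Theorem~\ref{main} but simply introduces it with the sentence ``By putting together Theorem~\ref{lev2}, Corollary~\ref{eccolo2} and Lemma~\ref{lemmasystem}, we obtain our final result\ldots'', which is precisely the three-part citation you outline.
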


\begin{example} The equations $x_{1}-x_{2}y_{1}-2x_{2}y_{1}y_{2}=0$ and $x^{2}+y^{2}-z^{2}=0$ are partition regular near zero, hence also the equation $\left(x_{2}y_{1}+2x_{2}y_{1}y_{2}\right)^{2}+y^{2}-z^{2}$ is partition regular near zero.\end{example}

To conclude, we show that the notions of partition regularity on $\mathbb{R}^{+}$ and partition regularity near zero are closely related.

\begin{theorem}\label{inverti} Let $P_{1}\left(x_{1},\dots,x_{n}\right)$, $\dots$, $P_{m}\left(x_{1},\dots,x_{n}\right)\in\mathbb{R}\left[x_{1},\dots,x_{n}\right]$. Assume that the system 
\begin{equation}\label{Sigma} \sigma\left(x_{1},\dots,x_{n}\right):=\begin{cases}  P_{1}\left(x_{1},\dots,x_{n}\right),\\ \hspace{1.2cm}\vdots \\ P_{m}\left(x_{1},\dots,x_{n}\right), \end{cases}\end{equation}

does not have any constant solution in $\R^{+}$, namely for all $r\in\mathbb{R}\setminus\{0\}$ $\sigma(r,\dots,r)\neq 0$. Then the following facts are equivalent:
\begin{enumerate}
\item the system $\sigma\left(x_{1},\dots,x_{n}\right)=0$ is partition regular near zero;
\item the system $\sigma\left(\frac{1}{x_{1}},\dots,\frac{1}{x_{n}}\right)=0$ is partition regular on $\R^{+}$.
\end{enumerate}
\end{theorem}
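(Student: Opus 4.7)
The plan is to exploit the involutive bijection $\phi\colon\R^+\to\R^+$, $\phi(x)=1/x$, together with the ultrafilter characterizations of partition regularity from Theorem~\ref{aaaa} and Proposition~\ref{ultra part zero}. The pushforward $\beta\phi$ is a self-homeomorphism of $\beta\R^+$ which sends $0^+$ onto the set of ultrafilters on $\R^+$ that live at infinity, and it intertwines $\iota_\sigma$-ultrafilters with $\iota_\tau$-ultrafilters, where $\tau(x_1,\dots,x_n):=\sigma(1/x_1,\dots,1/x_n)$. Hence the theorem will reduce to showing that, under the no-constant-solution hypothesis, every $\iota_\tau$-ultrafilter on $\R^+$ is forced to live at infinity.

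Direction $(1)\Rightarrow(2)$ is routine: starting from a $\iota_\sigma$-ultrafilter $\U\in 0^+$, the pushforward $\beta\phi(\U)=\{A\subseteq\R^+\mid 1/A\in\U\}$ is a $\iota_\tau$-ultrafilter on $\R^+$, since for any $A\in\beta\phi(\U)$ one picks $a_1,\dots,a_n\in 1/A$ with $\sigma(a_1,\dots,a_n)=0$ and then $b_i:=1/a_i\in A$ satisfy $\tau(b_1,\dots,b_n)=\sigma(a_1,\dots,a_n)=0$. The substantive direction is $(2)\Rightarrow(1)$. By Theorem~\ref{aaaa} fix a $\iota_\tau$-ultrafilter $\W\in\beta\R^+$; the key claim is that $\W$ lives at infinity. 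If $\W$ were principal at some $b\in\R^+$, then $\{b\}\in\W$ would have to contain a solution, forcing $\sigma(1/b,\dots,1/b)=0$; if $\W$ were nonprincipal but lived at some $r\in\R^+$, then for every $\delta>0$ the set $(r-\delta,r+\delta)\cap\R^+$ would lie in $\W$ and hence contain a solution $b_1^{(\delta)},\dots,b_n^{(\delta)}$ of $\tau=0$, and letting $\delta\to 0$ the continuity of the polynomials $P_j(1/x_1,\dots,1/x_n)$ at $(r,\dots,r)$ would again force $\sigma(1/r,\dots,1/r)=0$. Both conclusions contradict the hypothesis, so $\W$ lives at infinity, and the computation mirror to $(1)\Rightarrow(2)$ then shows that $\beta\phi(\W)$ is a $\iota_\sigma$-ultrafilter containing every $(0,\varepsilon)$, hence belonging to $0^+$; Proposition~\ref{ultra part zero} closes the argument.

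The main obstacle is precisely the continuity/limit step eliminating $\iota_\tau$-ultrafilters that live at a finite point. This is the only place where the hypothesis $\sigma(r,\dots,r)\neq 0$ for $r\neq 0$ is actually used, and it is essential: otherwise the trivial equation $x-c=0$ with $c>0$ becomes partition regular on $\R^+$ after the substitution $x\mapsto 1/x$ (it has the single solution $x=1/c$), while it is obviously not partition regular near zero.
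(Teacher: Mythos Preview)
Your proof is correct and follows essentially the same route as the paper's: both push ultrafilters through the involution $x\mapsto 1/x$, and for the substantive direction $(2)\Rightarrow(1)$ both use the continuity of the $P_j$ near a putative finite accumulation point $r$ to force a constant solution, contradicting the hypothesis and thereby showing the witness lives at infinity. Your write-up is in fact slightly cleaner than the paper's (you correctly obtain $\sigma(1/r,\dots,1/r)=0$ rather than $\sigma(r,\dots,r)=0$, and you spell out the $(1)\Rightarrow(2)$ direction instead of citing Lefmann), but the underlying argument is the same.
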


\begin{proof} The implication $(1)\Rightarrow (2)$ is a particular case of Theorem 2.1 in \cite{lefmann}, as the function $f(x)=\frac{1}{x}$ is a bijection on $\mathbb{R}^{+}$ and the partition regularity near zero is a particular case of the partition regularity on $\R$.

Let us prove that $(2)\Rightarrow (1)$. As $\sigma\left(\frac{1}{x_{1}},\dots,\frac{1}{x_{n}}\right)=0$ is partition regular on $\R^{+}$, it has a witness of its partition regularity in $\beta\R^{+}$. 

If this witness $\U$ lives at a finite point, it is by definition infinitesimaly close to some $r\in\mathbb{R}^{+}$. Then it must be $\sigma(r,\dots,r)=0$. In fact, if not, by continuity there exists $\varepsilon>0$ such that for all $r_{1},\dots,r_{n}\in(r-\varepsilon,r+\varepsilon)$ one has $\sigma\left(r_{1},\dots,r_{n}\right)\neq 0$. But then the system $\sigma\left(x_{1},\dots,x_{n}\right)$ does not have any solution in $\left(r-\varepsilon,r+\varepsilon\right)\in\U$, so $\U$ cannot witness its partition regularity. As we assumed that $\sigma\left(x_{1},\dots,x_{n}\right)=0$ does not have any constant solution in $\R^{+}$, this proves that $\U$ must live at infinity. Let
\[\U^{-1}:=\{A\subseteq\R^{+}\mid A^{-1}\in\U\},\]

where $A^{-1}:=\{a\in\R^{+}\mid \frac{1}{a}\in A\}$.

\

{\bfseries Claim:} The following facts hold:
\begin{itemize}
\item $\U^{-1}\in 0^{+}$;
\item $\U^{-1}$ witnesses the partition regularity of $\sigma\left(x_{1},\dots,x_{n}\right)=0$.
\end{itemize}

Clearly, our thesis follows immediately from the above claim by Proposition \ref{ultra part zero}. Let us prove the claim.

To prove the first fact, let $\varepsilon>0$. Then $\left(\frac{1}{\varepsilon},+\infty\right)\in\U$ as $\U$ lives at infinity, so $(0,\varepsilon)\in\U^{-1}$ by definition.

To prove the second fact, take any $A\in\U^{-1}$. As $\U$ witnesses the partition regularity of $\sigma\left(\frac{1}{x_{1}},\dots,\frac{1}{x_{n}}\right)=0$, there are $b_{1},\dots,b_{n}\in A^{-1}$ such that $\sigma\left(\frac{1}{b_{1}},\dots,\frac{1}{b_{n}}\right)=0$. But for all $i\leq n$ $\frac{1}{b_{i}}\in A$, which proves that $A$ contains a solution to $\sigma\left(x_{1},\dots,x_{n}\right)=0$. \end{proof}

Notice that, in Theorem \ref{inverti}, the hypothesis on the existence of nonconstant solutions could be substituted with the request that the partition regularity of the system $\sigma\left(\frac{1}{x_{1}},\dots,\frac{1}{x_{n}}\right)=0$ is witnessed by an ultrafilter that lives at infinity.

\end{document}